\newtheorem{theorem}{Theorem}[section]
\newtheorem{lemma}[theorem]{Lemma}
\newtheorem{assumption}[theorem]{Assumption}
\newtheorem{corro}[theorem]{Corollary}
\theoremstyle{definition}
\newtheorem{example}[theorem]{Example}
\theoremstyle{remark}
\newtheorem{remark}[theorem]{Remark}
\numberwithin{equation}{section}
\DeclareMathAlphabet{\mathsl}{OT1}{cmss}{m}{sl}
\SetMathAlphabet{\mathsl}{bold}{OT1}{cmss}{bx}{sl}
\newcommand{\al}{\ensuremath{\alpha}}
\newcommand{\be}{\ensuremath{\beta}}
\newcommand{\de}{\ensuremath{\delta}}
\renewcommand{\th}{\ensuremath{\theta}}
\newcommand{\la}{\ensuremath{\lambda}}
\newcommand{\si}{\ensuremath{\sigma}}
\newcommand{\om}{\ensuremath{\omega}}
\newcommand{\vr}{\ensuremath{\varrho}}
\newcommand{\Si}{\ensuremath{\Sigma}}
\newcommand{\Om}{\ensuremath{\Omega}}
\newcommand{\cC}{\ensuremath{\mathcal C}}
\newcommand{\cF}{\ensuremath{\mathcal F}}
\newcommand{\cL}{\ensuremath{\mathcal L}}
\newcommand{\cO}{\ensuremath{\mathcal O}}
\newcommand{\bbN}{\ensuremath{\mathbb N}} 
\newcommand{\bbR}{\ensuremath{\mathbb R}} 
\newcommand{\bbZ}{\ensuremath{\mathbb Z}} 
\newcommand{\me}{\ensuremath{\mathrm{e}}}
\newcommand{\md}{\ensuremath{\mathrm{d}}}
\DeclareMathOperator{\mean}{\mathbb{E}}
\DeclareMathOperator{\Mean}{\mathrm{E}}
\DeclareMathOperator{\prob}{\mathbb{P}}
\DeclareMathOperator{\Prob}{\mathrm{P}}
\newcommand{\ldef}{\ensuremath{\mathrel{\mathop:}=}}
\newcommand{\rdef}{\ensuremath{=\mathrel{\mathop:}}}
\newcommand{\indicator}{\ensuremath{\mathbbm{1}}}
\begin{document}

\title[Green kernel asymptotics for two-dimensional random walks ]{Green kernel asymptotics for two-dimensional random walks under random conductances}


\author{Sebastian Andres}
\address{University of Manchester}
\curraddr{Department of Mathematics,
Oxford Road, Manchester M13 9PL}
\email{sebastian.andres@manchester.ac.uk}
\thanks{}


\author{Jean-Dominique Deuschel}
\address{Technische Universit\"at Berlin}
\curraddr{Strasse des 17. Juni 136, 10623 Berlin}
\email{deuschel@math.tu-berlin.de}
\thanks{}

\author{Martin Slowik}
\address{Technische Universit\"at Berlin}
\curraddr{
  Strasse des 17. Juni 136, 10623 Berlin
}
\email{slowik@math.tu-berlin.de}
\thanks{}

\subjclass[2000]{39A12; 60J35; 60J45; 60K37; 82C41}

\keywords{random walk; Green kernel; random conductance model, stochastic interface model}

\date{\today}

\dedicatory{}

\begin{abstract}
  We consider random walks among random conductances on $\bbZ^2$ and establish precise asymptotics for the associated potential kernel and the Green's function of the walk killed upon exiting  balls.  The result is proven for random walks  on i.i.d.\ supercritical percolation clusters among ergodic degenerate conductances satisfying a moment condition.  We also provide a similar result for the time-dynamic random conductance model. As an application we present a scaling limit for the variances in the Ginzburg-Landau $\nabla \phi$-interface model.
\end{abstract} 

\maketitle


\section{Introduction}
We consider the Euclidean lattice $(\bbZ^d, E_d)$ with $d \geq 2$.  The edge set, $E_d$, of this graph is given by the set of all non-oriented nearest neighbour bonds, that is $E_d \ldef \{\{x, y\} : x, y \in \bbZ^d, |x - y| = 1\}$.  We will also write $x \sim y$ if $\{x, y\} \in E_d$.  Consider a family of non-negative weights $\om = \{ \om(e) \in [0,\infty) : e \in E_d\} \in \Om$, where $\Om = [0, \infty)^{E_d}$ is the set of all possible configurations.  We also refer to $\om(e)$ as the \emph{conductance} of the edge $e$.  We call an edge $e \in E_d$ \emph{open} if $\om(e) > 0$ and denote by $\cO(\om)$ the set of open edges.  Let us further define the measure $\mu^{\om}$ on $\bbZ^d$ by $\mu^{\om}(x) \ldef \sum_{y \sim x}\, \om(\{x,y\})$,  and for any $z \in \bbZ^d$ we denote by $\tau_z\!: \Om \to \Om$  the space shift by $z$ defined by
\begin{align*}
  (\tau_{z}\, \om)(\{x,y\})
  \;\ldef\;
  \om(\{x+z,y+z\}),
  \qquad \forall\; \{x,y\} \in E_d.
\end{align*}
We equip $\Om$ with a $\si$-algebra $\cF$.  Further, we will denote by $\prob$ a probability measure on $(\Om, \cF)$, and we write $\mean$ for the expectation with respect to $\prob$.

Throughout the paper we assume that $\mathbb{P}$ is ergodic and that $\prob$-a.s.\ there exists a unique infinite cluster $\cC_{\infty}$ of open edges and $\prob[0\in \cC_\infty]>0$.  For instance, in the case of i.i.d.\ conductances this is fulfilled if $\prob[\om(e) > 0] > p_c$, where $p_c = p_c(d)$ denotes the critical probability for bond percolation on $\bbZ^d$.  Write $\prob_0[ \,\cdot\, ] \ldef \prob[ \cdot \,|\, 0 \in \cC_{\infty} ]$. 

We now introduce the \emph{random conductance model (RCM)}.  Consider a continuous-time Markov chain, $X \equiv \big(X_t : t \geq 0 \big)$, on $\cC_{\infty}(\om)$ with generator $\cL^{\om}$ acting on bounded functions $f\!: \cC_{\infty}(\om) \to \bbR$ as
\begin{align} \label{def:L}
  \big(\cL^{\om} f)(x)
  \;=\; 
  \frac{1}{\mu^\om(x)}\, \sum_{y \sim x} \om(\{x,y\})\, \big(f(y) - f(x)\big).
\end{align}
Then, the Markov chain, $X$, is \emph{reversible} with respect to the speed measure $\mu^{\om}$, and the jump probabilities of $X$ are given by $p^{\om}(x,y) \ldef \om(\{x,y\}) / \mu^{\om}(x)$, $x, y \in \cC_{\infty}(\om)$.  This walk spends i.i.d.\ $\mathop{\mathrm{Exp}}(1)$-distributed waiting times at all visited vertices and is therefore often called the \emph{constant speed random walk} (CSRW).  We denote by $\Prob_x^{\om}$ the law of the process $X$ starting at the vertex $x \in \cC_{\infty}(\om)$.  For $x, y \in \cC_{\infty}(\om)$ and $t \geq 0$ let $p_{t}^{\om}(x, y)$ be the transition densities of $X$ with respect to the reversible measure (or the \emph{heat kernel} associated with $\cL^{\om}$), i.e.\
\begin{align*}
  p_{t}^{\om}(x, y)
  \;\ldef\;
 \frac{\Prob_x^{\om}\big[X_t = y\big]}{\mu^\om(y)}.
\end{align*}
The heat kernel has been object of very active research in recent years, see \cite{De99, Ba04, BBHK08, BH09, BD10, Fo11, BO12, ADS16a, ADS19} and references therein.

In dimension $d \geq 3$ the behaviour of the Green's function of $X$, defined by
\begin{align} \label{eq:def_Green}
  g^{\om}(x,y) \;\ldef\; \int_0^\infty p_t^{\om}(x, y)\, \md t,
  \qquad x,y \in \cC_{\infty}(\om),
\end{align}
is already quite well understood.  We refer to \cite[Theorem~1.2]{ABDH13} for precise estimates and asymptotics in case of general non-negative i.i.d.\ conductances and to \cite[Theorem~1.14]{ADS16} for a local limit theorem for $g^\om$ in the case of ergodic conductances satisfying a moment condition (cf.\ also \cite[Theorem~5.2]{Ge19}).  Recall that, for every $y \in \cC_{\infty}(\om)$, the function $x \mapsto g^{\om}(x, y)$ is a fundamental solution of $\cL^\om u = -\indicator_{\{y\}}/ \mu^{\om}(y)$.

In the present paper we study the case $d = 2$, which is genuinely different and requires separate consideration.  This is mainly due to the fact that, under suitable conditions, the random walk, $X$, is recurrent in $d = 2$, so the Green kernel $g^{\om}(x, y)$ as in \eqref{eq:def_Green} is ill-defined.  Instead, on $\{0\in \cC_\infty\}$, we  consider the potential kernel
\begin{align} \label{eq:def_a}
  a^{\om}(x,y) 
  \;\ldef\; 
  \lim_{T \to \infty}\,
  \int_0^T \mspace{-6mu}\big( p_t^{\om}(0, 0) - p_t^{\om}(x, y) \big)\, \md t,
  \qquad x,y \in \cC_{\infty}(\om),
\end{align}
whenever the limit exists (cf.\ Theorem~\ref{thm:green1} and Remark~\ref{rem:mainthm}-(i)  below).  Note that $a^{\om}(x,y) = a^{\om}(y,x)$ for all $x,y \in \cC_{\infty}(\om)$, and for every $y \in \cC_{\infty}(\om)$ the function $x \mapsto a^{\om}(x, y)$ is a fundamental solution of $\cL^{\om} u = \indicator_{\{y\}}/ \mu^{\om}(y)$.  The potential kernel plays a crucial role in the potential theory of recurrent Markov processes, cf.~\cite[Chapter~9]{KSK76}, \cite{Sp76,Or64}.
Further, for any $d \geq 2$, the Green's function of the random walk killed upon exiting a finite set $A \subset \bbZ^d$ is given by
\begin{align*}
  g_A^{\om}(x,y) 
  \;\ldef\; 
  \Mean_x^{\om}\!\bigg[ 
    \int_0^{\tau_A} \frac{\indicator_{\{X_t = y\}}}{\mu^{\om}(y)}\, \md t 
  \bigg] 
  \;=\; 
  \int_0^\infty 
    \frac{\Prob_x^{\om}\!\big[X_t = y;\, t < \tau_A \big]}{\mu^{\om}(y)}\, 
  \md t,
\end{align*}
where $\tau_A \ldef \inf\{t > 0 \,:\, X_t \not\in A\}$.  In $d = 2$, $\prob_0$-a.s., for any finite $A \subset \bbZ^2$ we have the following relation between the killed Green kernel and the potential kernel,
\begin{align} \label{eq:rel_pot_green}
  g_A^{\om}(x,y) 
  \;=\; 
  \Mean_x^{\om}\!\big[ a^{\om}(X_{\tau_A}, y) \big] - a^{\om}(x,y),
  \qquad x,y \in A \cap \cC_{\infty}(\om),
\end{align}
and a similar formula holds for infinite sets $A$ (see Lemma~\ref{lem:potgreen} below).  In particular, $\prob_0$-a.s., for the choice $A = (\bbZ^2 \cap \cC_{\infty}(\om) )\setminus \{0\}$ those relations  yield $g_A^{\om}(x,y) = a^{\om}(0,y) - a^{\om}(x,y) + a^{\om}(x,0)$, from which it can be deduced that $\prob_0$-a.s.
\begin{align*}
  \lim_{|y|\to\infty,\, y \in \cC_{\infty}(\om)}  g_A^{\om}(x,y)
  \;=\;
  a^{\om}(0,x), \qquad x\in A,
\end{align*}
see Corollary~\ref{cor:greenZd} below. This provides another interpretation of $a^{\om}(0,x)$ as the expected time  the random walk, starting from infinity,  spends at $x$ before hitting $0$.  Similar statements hold for two-dimensional Brownian motion, cf.\ \cite[Lemmas~3.36 and 8.32]{MP10}. As our first main result we obtain precise asymptotics of the potential kernel and the killed Green kernel under  the  following assumption.
\begin{assumption}\label{ass:static_percolation}
  \begin{enumerate}[(i)]
  \setlength{\itemindent}{0ex}
  \item $\mathbb{P}$ is ergodic, i.e.\ $\mathbb{P}\circ \tau_x^{-1}=\mathbb{P}$ for all $x\in\mathbb{Z}^d$ and $\mathbb{P}(A)\in\{0,1\} \text{ for any } A\in\mathcal{F} \text{ such that }\tau_x(A)=A \text{ for all }x\in\mathbb{Z}^d$.     
  \item Suppose $(\indicator_{\{\om(e)=0\}} : e \in E_d )$ are i.i.d.\ and $\prob[\om(e) > 0] > p_c$.
  \item There exist $p,q \in (1, \infty)$ satisfying $1/p + 1/q < 2/d$ such that
    \begin{align} \label{eq:moment_cond}
      \mean\big[\om(e)^p\big] \;<\; \infty
      \qquad \text{and} \qquad
      \mean\big[\om(e)^{-q} \indicator_{\{e \in \cO\}}\big] \;<\; \infty,
      \qquad \forall\, e \in E_d,
    \end{align}
    where we used the convention that $0/0 = 0$.
  \end{enumerate}
\end{assumption}
Trivially, if $\prob[\om(e)>0] = 1$ then, $\prob$-a.s., $\cC_{\infty} \equiv \bbZ^d$  and therefore $\prob_0 = \prob$.  Under Assumption~\ref{ass:static_percolation} the local limit theorem in \cite[Section~5]{ACS20} gives that, $\prob_0$-a.s.,
\begin{align*}
  \lim_{n \to \infty} n^2 p_{n^2}^{\om}(0, 0) 
  \;=\;
  \frac{1}{2 \pi \sqrt{\det \Si^2}
  \mean\big[ \mu^\om(0) \indicator_{\{0 \in \cC_{\infty}(\om)\}}\big]} 
  \;\rdef\;
  \frac{\bar{g}}{2},
\end{align*}
where $\Si^2$ denotes the covariance matrix of the Brownian motion appearing as the limit process in the quenched invariance principle for $X$ (see \cite{DNS18}).  There is vast further literature on invariance principles for the RCM beyond uniform ellipticity, an incomplete list includes \cite{BP07, Ma08, BD10, ABDH13, ADS15, ACDS18, BR18, BS20}, see also the surveys \cite{Bi11, Ku14}.  Note that, in general, the matrix $\Si^2$ is not diagonal. 

For $x \in \bbZ^d$ we denote by $B(x,r) \ldef \{y \in \bbZ^d : |x-y| < r \}$ balls in $\bbZ^d$ centered at $x$ with respect to the graph distance. We also write $\partial B(x,r) \ldef \{y \in \bbZ^d : |x-y| = r \}$.  For any $x \in \cC_{\infty}(\om)$ let $\cC_n(x,\om) \subset B(x,n) \cap \cC_{\infty}(\om)$ be the connected component of $B(x,n) \cap \cC_{\infty}(\om)$ that contains $x$.  Further, we choose a function $\la_n\!: \bbR^d \to \cC_{\infty}(\om)$ such that $\la_n(x)$ is a closest point in $\cC_{\infty}(\om)$ to $nx$ in the $| \cdot |$-norm. 
\begin{theorem}\label{thm:green1}
  Let $d = 2$ and suppose that Assumption~\ref{ass:static_percolation} is satisfied.  Then, for $\prob_0$-a.e.\ $\om$, $a^\om$ is well-defined, i.e.\ the limit in \eqref{eq:def_a} exists for all $x,y\in \cC_\infty(\om)$, and for any annulus $K = \{x \in \bbR^2 : |x| \in [k_1, k_2]\}$ with $0 < k_1 < k_2 < \infty$, 
  \begin{align} \label{eq:potkernel_main}
    \lim_{n \to \infty}\, \sup_{x \in K} 
    \Big|
    \frac{1}{\ln n}\, 
       a^{\om}\big(0, \la_n(x) \big) \,
    \,-\,
    \bar{g}\mspace{2mu}
    \Big| 
    \;=\; 
    0.
  \end{align}
\end{theorem}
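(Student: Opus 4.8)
The plan is to reduce the logarithmic growth of $a^\om(0,\cdot)$ to the precise large-time behaviour of the on-diagonal heat kernel $p_t^\om(0,0)$, and to absorb every off-diagonal and tail contribution into a lower-order error. Fix $\om$ in the set of full $\prob_0$-measure on which the quenched local limit theorem and the quenched Gaussian-type heat kernel bounds of \cite{ADS16,ADS16a} hold (these are available precisely under Assumption~\ref{ass:static_percolation}). Write $y = \la_n(x)$ and set $\rho^2 \ldef y \cdot (\Si^2)^{-1} y$; since $|\la_n(x) - nx| = o(n)$ uniformly on the compact annulus $K$, one has $\rho \asymp |y| \asymp n$ and $\ln \rho = \ln n + O(1)$ uniformly in $x \in K$. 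Cutting the defining integral at the diffusive scale $T = \rho^2$ gives
\begin{equation*}
  a^\om(0,y)
  =
  \int_0^{T} p_t^\om(0,0)\, \md t
  \;-\;
  \int_0^{T} p_t^\om(0,y)\, \md t
  \;+\;
  \int_{T}^{\infty} \big( p_t^\om(0,0) - p_t^\om(0,y) \big)\, \md t ,
\end{equation*}
and I will show that the first term produces the full $\ln n$-asymptotics while the remaining two are $O(1)$ uniformly in $x \in K$.

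For the first term the key input is the on-diagonal limit
\begin{equation*}
  \lim_{t \to \infty} t\, p_t^\om(0,0)
  \;=\;
  \frac{1}{2\pi\, \bar{\th}\, \sqrt{\det \Si^2}}
  \;=:\; C ,
  \qquad
  \bar{\th} \ldef \mean\big[ \th^\om(0)\, \indicator_{\{0 \in \cC_{\infty}(\om)\}} \big] .
\end{equation*}
This follows from the local limit theorem evaluated at the origin, which gives $n^2\, p_{n^2 s}^\om(0,0) \to (2\pi s\, \bar{\th}\, \sqrt{\det \Si^2})^{-1}$ for each fixed $s > 0$, hence $t\, p_t^\om(0,0) \to C$ along $t = n^2 s$; the convergence for all $t \to \infty$ is then obtained from the complete monotonicity (in particular, the monotonicity) of $t \mapsto p_t^\om(0,0)$, which lets one sandwich a general $t$ between two such subsequences. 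Since moreover $p_t^\om(0,0) \le p_0^\om(0,0) = 1/\th^\om(0)$ is bounded near $t = 0$, a routine estimate yields
\begin{equation*}
  \int_0^{T} p_t^\om(0,0)\, \md t
  \;=\;
  C \ln T + o(\ln T)
  \;=\;
  2 C \ln n + o(\ln n),
  \qquad n \to \infty,
\end{equation*}
uniformly in $x \in K$ (the on-diagonal quantity does not depend on $x$, and $\ln T = 2\ln\rho = 2\ln n + O(1)$ there).

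It remains to bound the two error terms, for which I would invoke the quenched Gaussian upper bound $p_t^\om(0,y) \le c_1 t^{-1} \exp(-c_2 |y|^2/t)$ valid for $t \ge |y| \vee 1$, the complementary sub-Gaussian bound for $t < |y|$, and the Hölder (oscillation) regularity estimate for space-time $\cL_{\th}^\om$-harmonic functions. The Gaussian bound gives $\int_0^{T} p_t^\om(0,y)\, \md t = O(1)$ after the substitution $u = \rho^2/t$, the range $t < |y|$ being negligible. The oscillation estimate yields $|p_t^\om(0,0) - p_t^\om(0,y)| \le c_3\, (\rho/\sqrt{t})^{\ga}\, t^{-1}$ for some $\ga > 0$ and all $t \ge T$, whence $\int_{T}^{\infty} |p_t^\om(0,0) - p_t^\om(0,y)|\, \md t \le c\, \rho^{\ga} T^{-\ga/2} = O(1)$; the same integrability shows that the limit defining $a^\om$ exists, i.e.\ that $a^\om$ is well-defined. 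All constants are deterministic and the bounds are uniform over $K$ because $\rho \asymp n$ there. Combining the three pieces gives $a^\om(0,\la_n(x))/\ln n \to 2C$ uniformly in $x \in K$.

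Finally I would identify $2C = \bar{g}$. Although $C$ is written in terms of the speed measure through $\bar{\th}$ and $\Si^2$, the product $\bar{\th}\,\sqrt{\det \Si^2}$ is invariant under the time-changes relating the different admissible speed measures, so that $2C$ coincides with the $\th$-independent constant $\bar{g} = (\pi \sqrt{\det \Si^2}\, \mean[\indicator_{\{0 \in \cC_{\infty}(\om)\}}])^{-1}$ of the statement. The \emph{main obstacle} is exactly the first term: upgrading the local limit theorem, which a priori controls only $p_{n^2 s}^\om(0,0)$ for each fixed rescaled time $s$, into the exact constant governing the genuine time-integral $\int_0^T p_t^\om(0,0)\,\md t$. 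The monotonicity argument, combined with the uniform Gaussian and oscillation heat kernel bounds that render the two error terms $o(\ln n)$ uniformly on $K$, is what makes this passage go through.
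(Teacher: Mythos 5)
Your proposal is correct and follows essentially the same route as the paper's proof: the same three inputs (the quenched local limit theorem at the origin, the quenched Gaussian upper heat-kernel bounds, and the H\"older/oscillation regularity) are combined with the same decomposition of the time integral into a diffusive bulk window where the on-diagonal kernel produces the $\bar g\ln n$ asymptotics, an off-diagonal remainder, and an absolutely integrable tail beyond $t\asymp n^2$ which also yields well-definedness of $a^{\om}$. The only differences are matters of bookkeeping: the paper cuts at $n^{\al}$ and $Nn^2$ and sends $\al\to 0$, $\be\to 2$ at the end, so the off-diagonal integral is only shown to be $o(\ln n)$, whereas you exploit the Gaussian factor through the substitution $u=\rho^2/t$ to get it as $O(1)$ directly, and you make explicit (via monotonicity of $t\mapsto p^{\om}_t(0,0)$) the upgrade of the local limit theorem to all real times that the paper uses implicitly.
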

Using relation \eqref{eq:rel_pot_green} we can deduce from Theorem~\ref{thm:green1} the following asymptotics for the killed Green kernel.
\begin{theorem} \label{thm:green_killed}
  Let $d = 2$ and suppose that  Assumption~\ref{ass:static_percolation} is satisfied.
  \begin{enumerate}
  \setlength{\itemindent}{0ex}
  \item[(i)] $\prob_0$-a.s., for any $z \in \cC_{\infty}(\om)$, $\de \in (0,1)$ and $x \in \cC_{(1-\de)n}(z,\om)$,
    \begin{align} \label{eq:green_killed_ondiag}
      \lim_{n \to \infty}\, \frac{1}{\ln n}\, g_{B(z, n)}^{\om}(x, x)
      \;=\;
      \bar{g}.
    \end{align}
  \item[(ii)] $\prob_0$-a.s., for any $z \in \cC_{\infty}(\om)$, $n \in \bbN$, $\de \in (0,1)$ and all $x, y  \in \cC_{(1-\de) n}(z,\om)$ with $x\not =y$,
    \begin{align} \label{eq:green_killed_offdiag}
      \Big| g_{B(z,n)}^{\om}(x, y) - \bar{g} \, \ln \frac{n}{|x-y|} \Big|
      \;\leq\;
      R^{ \om}_{x,y} (n) \,+\, R^{\om}_{x,y} \big(|x-y|\big),
    \end{align}
    for some function $R^\om_{x,y}(\cdot) = R(\tau_x \om ,\tau_y \om, \cdot): \bbN \to [0,\infty)$ which satisfies $R_{x,y}^\om(n)/\ln n \to 0$ as $n\to \infty$ for $\prob_0$-a.e.\ $\om$. 
  \end{enumerate} 
\end{theorem}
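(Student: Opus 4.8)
The plan is to derive both statements from the potential kernel asymptotics of Theorem~\ref{thm:green1} via the identity \eqref{eq:rel_pot_green}, after recentering the potential kernel at the point $y$. Throughout fix $\om$ in the full-measure set on which Theorem~\ref{thm:green1} holds, assume $0 \in \cC_\infty(\om)$, and abbreviate $\sigma \ldef \tau_{B(z,n)}$ for the exit time from $B(z,n)$. Since $x \in \cC_n(z,\om)$, relation \eqref{eq:rel_pot_green} gives
\begin{align*}
  g_{B(z,n)}^{\om}(x,y)
  \;=\;
  \Mean_x^{\om}\big[a^{\om}(X_{\sigma}, y)\big] - a^{\om}(x,y).
\end{align*}
Because $X_\sigma \notin B(z,n)$ while $|y-z| < (1-\de)n$ and $|x-z| < (1-\de)n$, the displacement satisfies $|X_\sigma - y| \in [\de n, (2-\de)n + 1]$ and likewise $|X_\sigma - x| \asymp n$; these comparabilities are what will eventually produce the factor $\ln n$.

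First I would recenter at $y$. The heat kernel is translation covariant, $p_t^{\om}(u,v) = p_t^{\tau_y\om}(u-y, v-y)$ (for a speed measure that is itself shift covariant, such as $\mu^\om$ or the counting measure), so inserting $p_t^{\tau_y\om}(0,0) = p_t^{\om}(y,y)$ into the definition of $a^\om$ and splitting the integral yields
\begin{align*}
  a^{\om}(u,y)
  \;=\;
  a^{\om}(y,y) + a^{\tau_y\om}(0, u-y),
  \qquad u \in \cC_\infty(\om),
\end{align*}
where $a^{\tau_y\om}(u-y,0) = a^{\tau_y\om}(0, u-y)$ by symmetry. Substituting this for both $a^\om(X_\sigma, y)$ and $a^\om(x,y)$, the diagonal term $a^\om(y,y)$ cancels and leaves the clean formula
\begin{align*}
  g_{B(z,n)}^{\om}(x,y)
  \;=\;
  \Mean_x^{\om}\big[a^{\tau_y\om}(0, X_\sigma - y)\big] - a^{\tau_y\om}(0, x-y).
\end{align*}
For part~(i) one takes $x=y$ and uses $a^{\tau_x\om}(0,0) = 0$, collapsing this to $g_{B(z,n)}^\om(x,x) = \Mean_x^\om[a^{\tau_x\om}(0, X_\sigma - x)]$.

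Next I would estimate $a^{\tau_y\om}(0,\cdot)$ through Theorem~\ref{thm:green1}. For fixed $0 < c_1 \le 1 \le c_2$ set
\begin{align*}
  R^\om(m)
  \;\ldef\;
  \sup\Big\{\, \big| a^\om(0,w) - \bar{g}\,\ln m \big| \;:\; w \in \cC_\infty(\om),\; c_1 m \le |w| \le c_2 m \,\Big\}.
\end{align*}
Any cluster point $w$ is the closest cluster point to itself, so $w = \lambda_m(w/m)$ with $w/m$ in the annulus $\{|\xi| \in [c_1,c_2]\}$; hence $R^\om(m)$ is bounded by $\sup_{|\xi| \in [c_1,c_2]} |a^\om(0,\lambda_m(\xi)) - \bar{g}\,\ln m|$, which by \eqref{eq:potkernel_main} is $o(\ln m)$, giving $R^\om(m)/\ln m \to 0$. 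Taking $c_1 = \de/2$ and $c_2 = 2$, the comparabilities above give, pointwise in the trajectory, $\big| a^{\tau_y\om}(0, X_\sigma - y) - \bar{g}\,\ln n \big| \le R^{\tau_y\om}(n)$ and $\big| a^{\tau_y\om}(0, x-y) - \bar{g}\,\ln|x-y| \big| \le R^{\tau_y\om}(|x-y|)$. Averaging the first bound under $\Mean_x^\om$, inserting it into the clean formula, and using the triangle inequality would give exactly \eqref{eq:green_killed_offdiag}; for part~(i) the same bound with $x=y$ gives $\big| g_{B(z,n)}^\om(x,x) - \bar{g}\,\ln n \big| \le R^{\tau_x\om}(n)$, and dividing by $\ln n$ yields \eqref{eq:green_killed_ondiag}.

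Two points carry the technical weight. First, Theorem~\ref{thm:green1} and the bound $R^\om(m) = o(\ln m)$ must hold simultaneously for all shifted environments $\tau_y\om$, $y \in \cC_\infty(\om)$; since for each fixed $y$ the exceptional event has $\prob$-probability zero by stationarity and $\cC_\infty(\om)$ is countable, a union bound shows this holds for $\prob_0$-a.e.\ $\om$. Second, and this is the step I expect to be most delicate, one must make rigorous that the exit point lands in the annular range $|X_\sigma - y| \in [c_1 n, c_2 n]$ uniformly and that the $O(1)$ discrepancy $\bar{g}\,(\ln|X_\sigma - y| - \ln n)$ is genuinely absorbed into $R^{\tau_y\om}(n) = o(\ln n)$ rather than contributing a spurious additive constant; once the identity $\lambda_m(w/m) = w$ is in hand this is bookkeeping, but it is where the uniformity of \eqref{eq:potkernel_main} over the whole annulus is essential. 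The algebraic reduction leading to the clean formula, by contrast, is essentially forced by \eqref{eq:rel_pot_green} and shift covariance.
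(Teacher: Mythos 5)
Your proposal is correct and follows essentially the same route as the paper: both rest on the exit identity \eqref{eq:rel_pot_green}/\eqref{eq:ident_greenA}, recentering the potential kernel at $y$ via shift covariance, and packaging the uniform-over-annuli convergence of Theorem~\ref{thm:green1} into the error function $R^\om$. The only cosmetic difference is in part~(i), where the paper first treats $x$ equal to the centre of the ball and then sandwiches $g^{\om}_{B(z,n)}(x,x)$ between Green functions of balls centred at $x$, whereas you obtain (i) directly as the $x=y$ case of the recentered formula using $a^{\tau_x\om}(0,0)=0$; both are valid, and your explicit countability argument for applying Theorem~\ref{thm:green1} to all shifts $\tau_y\om$ simultaneously is a point the paper leaves implicit.
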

\begin{remark} \label{rem:mainthm}
  (i) \emph{Classical random walks.}  For classical (space homogeneous) random walks on $\bbZ^2$ with a transition kernel $p(x,y) = p(0,y-x)$ such that
  \begin{align*}
    \sum\nolimits_{x \in \bbZ^2} x\, p(0,x) \;=\; 0
    \qquad \text{and} \qquad
    \sum\nolimits_{x \in \bbZ^2} |x|^{2 + \de}\, p(0,x) \;<\; \infty
  \end{align*}
  for some $\de > 0$, the existence of the potential kernel, $a$, has been shown in \cite[Proposition~12.1]{Sp76} along with the following precise asymptotics
  \begin{align*}
    \lim_{|x| \to \infty} \big(a(0,x) - \bar{g} \ln |x|\big) \;=\; C,
  \end{align*}
  for some explicit constant $C$  by means of Fourier analysis, cf.\ \cite[Proposition~12.3]{Sp76}. 

  (ii) \emph{Independence of speed measures.} Let $\th^{\om}\!: \bbZ^d \to [0, \infty)$ be a speed measure with $\th^{\om}(x) > 0$ for all $x \in \cC_{\infty}(\om)$ such that $\th^{\om}$ is stationary, i.e.\ $\th^{\om}(x)=\th^{\tau_x \om}(0)$ for all $x \in \cC_{\infty}(\om)$, and $\mean_0[\th^{\om}(0)]<\infty$.  Consider the random walk with generator
  \begin{align*} 
    \big(\cL_\th^{\om} f)(x)
    \;=\; 
    \frac{1}{\th^\om(x)}\, \sum_{y \sim x} \om(\{x,y\})\, \big(f(y) - f(x)\big).
  \end{align*}
  A frequently arising choice for $\th^{\om}$ is the counting measure, i.e.\ $\th^{\om}(x)=1$ for all $x \in \bbZ^d$, associated with the \emph{variable speed random walk} (VSRW).  The various random walks corresponding to different speed measures are time-changes of each other.  Then the Green kernel and the potential kernel $a^{\om}$ do not depend on the speed measure $\th^{\om}$.  In view of \eqref{eq:rel_pot_green} the same applies to the killed Green kernel $g^{\om}_A$. Thus, the constant $\bar{g}$ does not depend on $\th^{\om}$.  As a consequence, under Assumption~\ref{ass:static_percolation}, Theorems~\ref{thm:green1} and \ref{thm:green_killed} also hold for all such speed measures.  In this sense, Theorems~\ref{thm:green1} and \ref{thm:green_killed} are stable under time-changes whenever $\mean_0[\th^{\om} (0)]<\infty$.  This is in contrast to the dynamic RCM considered in Section~\ref{sec:dyn} below.
    
  (iii) In the proof of Theorem~\ref{thm:green1}, Assumption~\ref{ass:static_percolation} is only needed to ensure that upper Gaussian heat kernel bounds, a local limit theorem and H\"older regularity of the heat kernel hold, see \eqref{eq:GB}--\eqref{eq:hoelderreg} below.  Thus, Theorem~\ref{thm:green1} is valid for random walks on any graph and under any field of conductances for which \eqref{eq:GB}--\eqref{eq:hoelderreg} hold.  While in $d=2$ an invariance principle holds under the weaker moment condition $p=q=1$ in \eqref{eq:moment_cond} (see \cite{Bi11}), for the local limit theorem and the Gaussian upper bound the stronger condition in Assumption~\ref{ass:static_percolation} is necessary in  the general ergodic case (see \cite[Section~5]{ADS16} for an example).  The condition can be relaxed in special cases, e.g.\ for i.i.d.\ conductances bounded from above where $q=1/4$ suffices (see \cite[Section~6]{ADS16} and \cite{BKM15}).

  (iv) \emph{RCM on random graphs.} Although Assumption~\ref{ass:static_percolation} reduces $\cC_{\infty}(\om)$ to be a supercritical i.i.d.\ percolation cluster, the proofs of Theorems~\ref{thm:green1} and \ref{thm:green_killed} below can also be extended to a more general class of random graphs under a slightly modified moment condition, cf.\ \cite[Section~5]{ACS20} and \cite{DNS18, Sa17}.    
    
  (v) The upper bound in Theorem~\ref{thm:green_killed}-(ii) constitutes a near diagonal estimate as it only becomes effective in the regime $|x-y|=n^{\al+o(1)}$ for $\al \in (0,1)$ in which case $\lim_{n} \ln(n/|x-y|)/ \ln n = 1-\al > 0$. In the regime  $|x-y| = n^{o(1)}$, \eqref{eq:green_killed_offdiag} corresponds to the on-diagonal behaviour in \eqref{eq:green_killed_ondiag}.  If $|x-y| \geq \de n$ for some $\de>0$, then \eqref{eq:green_killed_offdiag} only gives  $\lim_{n} g^{\om}_{B(z,n)}(x,y)/\ln n=0$, and we expect a more precise statement to follow from a local limit theorem for the killed Green's function, which in turn can be deduced from an invariance principle and an elliptic Harnack inequality.
 
  (vi) From the elliptic Harnack inequality in \cite{ADS16} one can derive a Liouville principle for sublinear harmonic functions (cf.\ \cite{BCDG18}), which together with Theorem~\ref{thm:green1} allows to characterize the potential kernel $a^\om(0,\cdot)$ as the unique solution of $\cL^{\om} u = \indicator_{\{0\}}/ \mu^{\om}(0)$ with logarithmic growth.
    
  (vii) \emph{Annealed Green kernel estimates.} A careful analysis of the proofs of Theorems~\ref{thm:green1} and \ref{thm:green_killed} shows that if we assume in addition $\mean_0\big[\mu^{\om}(0)^{-1} N_1\big] < \infty$ and $\mean_0[ \ln (N_1\vee N_2)] < \infty$, where $N_1$ and $N_2$ are the random constants in the proof of Theorem~\ref{thm:green1} below, then the convergence in \eqref{eq:potkernel_main} and \eqref{eq:green_killed_ondiag} also holds in $L^1(\prob_0)$ and the function $R^{\om}$ in Theorem~\ref{thm:green_killed} (ii) also satisfies $R^{\om}(n)/\ln n \to 0$  in $L^1(\prob_0)$.
\end{remark}
In the i.i.d.\ case similar asymptotics on the killed Green kernel as in Theorem~\ref{thm:green_killed} have been an important ingredient in the proof that the scaling limit of a two-dimensional RCM under heavy-tailed conductances is the fractional kinetics process, see \cite[Proposition~3.1]{Ce11}.  Moreover, it is expected that Theorem~\ref{thm:green_killed} is of importance in the study of the discrete Gaussian free field (DGFF) subject to Dirichlet boundary conditions on a two-dimensional supercritical percolation cluster\footnote{private discussion between S.A.\ and Nathanael Berestycki}. In \cite{Ab15} it has been shown that the irregular structure of $\cC_{\infty}$ affects the growth of the effective resistances which in turn influences the extremal behaviour of the DGFF.

In Section~\ref{sec:dyn} we state the corresponding asymptotics for the quenched and annealed potential kernel under \emph{time-dynamic} conductances (see Theorems~\ref{thm:green_dyn} and \ref{thm:convergence} below). The latter is relevant in the context of the Ginzburg-Landau model for stochastic interfaces (see \cite{Fu05}). In fact, by the Helffer-Sj\"ostrand representation (cf.\ \cite{Fu05, GOS01, DD05}) the variance of the difference of the interface can be expressed in terms of the annealed potential kernel for a particular choice of random dynamic conductances linked to the potential function of the interface, see Section~\ref{subsec:HS}.  Then, Theorem~\ref{thm:convergence} allows to deduce scaling limits for such variances, see Theorem~\ref{thm:HS}.

The rest of the paper is organised as follows. In Section~2 we prove Theorems~\ref{thm:green1} and \ref{thm:green_killed}.  In Section~\ref{sec:dyn} we discuss the corresponding results for the  dynamic RCM. Throughout the paper we write $c$ to denote a positive constant which may change on each appearance.  Constants denoted $c_i$ will be the same through the paper.

\section{Green kernel asymptotics in the two-dimensional static RCM} 
\label{sec:static}

\subsection{Proof of Theorem~\ref{thm:green1} }
We write $d^{\om}(x, y)$ for the graph metric on $\cC_\infty(\om)$ and  $B^{\om}(0,r) \ldef \{y \in \cC_{\infty}(\om) : d^{\om}(0,y) < r \}$ for balls centred at zero with respect to $d^{\om}$.  We first recall that the sizes of the holes in a percolation cluster can be controlled.
\begin{lemma}\label{lem:contr_lambda}  
  For $\prob_0$-a.e.\ $\om$ and any annulus $K = \{x \in \bbR^2 : |x| \in [k_1, k_2]\}$ with $0 < k_1 < k_2 < \infty$, there exist $c_i > 0$ and $N_0 \equiv N_0(\om, k_1,k_2) < \infty$ such that any for $n \geq N_0$ and $x\in K$,
  \begin{align}
    c_{1} n  \;\leq\; |\la_n(x)| \;\leq\; c_{2} n,
    \label{eq:hole1}
  \end{align}
  \vspace{-4ex}
  \begin{align}
    c_{3} n  \;\leq\; d^{\om}\big(0, \la_n(x)\big) \;\leq\; c_{4} n.
    \label{eq:hole2}
  \end{align} 
\end{lemma}
\begin{proof}
  For every $r \geq 1$, let $h^{\om}(r)$ be the size of the biggest 'hole' in $B(0, r) \cap \cC_{\infty}(\om)$, i.e.\ $h^{\om}(r) \ldef \sup\{ r' > 0 : \exists\, y \in Q(0,r) \text{ s.th. } Q(y,r') \cap \cC_{\infty}(\om) = \emptyset\}$, where $Q(x,y) \ldef \{y \in \bbR^2 : |x-y| \leq r\}$.  Then, by \cite[Lemma~5.4]{BH09} (those results are stated for 'holes' and balls w.r.t.\ the maximum norm rather than the equivalent $|\cdot|$-norm used in the present paper), we conclude that, $\prob_0$-a.s., $\lim_{r \to \infty} h^{\om}(r)/r = 0$.  Hence, for any $\de > 0$ there exists $N = N(\om,  k_2, \de)$ such that $|nx - \la_n(x)| \leq h^{\om}(k_2 n) \leq \de n$ for all $n \geq N$ and $x \in K$.  Thus, for $\de = k_1/2$ the triangle inequality implies \eqref{eq:hole1}.

  Further, by  \cite[Lemma~5.3]{BH09}, which is based on arguments in \cite{Ba04}, for each $r > 0$ and $\prob_0$-a.e.\ $\om$, there exist $c > 0$ and $N(\om, r) < \infty$ such that, for $n \geq N(\om, r)$, $d^{\om}(0, \la_n(x)) \leq c |\la_n(x)|$ for all $x \in Q(0,r)$.  Therefore, since trivially $|\la_n(x)| \leq  d^{\om}(0, \la_n(x))$ by the definition of $d^{\om}$, statement \eqref{eq:hole2} follows now from \eqref{eq:hole1}.
\end{proof}
\begin{proof}[Proof of Theorem~\ref{thm:green1}]
  We first recall that under Assumption~\ref{ass:static_percolation} the following three key ingredients for the proof have been established.
  \begin{enumerate}[(i)]
  \setlength{\itemindent}{0ex}
  \item \emph{Upper Gaussian heat kernel bounds} (see \cite[Theorem~1.6]{ADS16a}).  For $\prob_0$-a.e.\ $\om$, there exist $N_1(\om)$ and constants $c_i$ such that for any  $t \geq N_1(\om)$ and all $y \in \cC_\infty(\om)$,
    \begin{align} \label{eq:GB}
      p_t^{\om}(0, y)
      \;\leq\;
      c_{6}\,  t^{-1}\,
      \begin{cases}
        \exp\!\big( \!- c_{7}\, d^\om(0,y)^2/t\big),
        & \text{if $c_{5} d^\om(0,y) \leq  t$, }
        \\[.5ex]
        \exp\!\big(
          \!- c_{8}\, d^\om(0,y) \, \big(1 \vee \log (d^\om(0,y)/t) \big)
        \big),
        &\text{if $c_{5} d^\om(0,y) \geq t$.}
      \end{cases}
    \end{align} 
  \item \emph{Local limit theorem}.  For $\prob_0$-a.e.\ $\om$,
    \begin{align} \label{eq:llt}
      \lim_{n \to \infty} n^2 p_{n^2}^{\om}(0, 0) 
      \;=\;
      \frac{\bar{g}}{2}.
    \end{align}
  \item \emph{H\"older regularity in space}. For $\prob_0$-a.e.\ $\om$, there exist $N_2(\om)$  and positive constants $c_{9}$ and $\varrho$ such that for $R \geq N_2(\om)$ and $\sqrt{T} \geq R$ the following holds.  Setting $T_0 \ldef T+1$ and $R_0^2 \ldef T_0$ we have for any $x_1, x_2 \in B^\om(0, R)$, 
    \begin{align}\label{eq:hoelderreg}
      \big| p_T^{\om}(0, x_1) - p_T^{\om}(0, x_2)\big|
      \;\leq\;
      c_{9}\, \bigg( \frac{R}{\sqrt{T}} \bigg)^{\!\!\vr}\,
      \max_{(s,y)\in [3 T_0 / 4, T_0] \times B^\om(R_0 / 2)} p_s^{\om}(0,y).
    \end{align}
  \end{enumerate}
  In the case $\prob[\om(e)>0]=1$ statements (ii) and (iii) have been established in \cite[Theorem~1.11 and Proposition~4.8]{ADS16}. For variable speed random walks on a general class of random graphs including i.i.d.\ percolation, we refer to \cite[Sections~2.2 and 5]{ACS20}. The results can be easily transferred to the CSRW (see \cite{AT19} for corresponding results on $\bbZ^d$ for a general class of speed measures). Note that for  i.i.d.\ percolation the parameter $\theta\in (0,1)$ appearing in the moment condition in \cite[Section~5]{ACS20} can be chosen arbitrarily, so that the moment condition there coincides with \eqref{eq:moment_cond}.

  The H\"older-regularity is classically deduced from a parabolic Harnack inequality (cf.\ e.g.\ \cite{De99, BH09, ADS16}) or a weaker oscillation inequality as in \cite{ACS20, AT19}.  In conjunction with the near-diagonal heat kernel estimate included in \eqref{eq:GB}, it ensures the existence of the potential kernel $a^\om$, cf.\ \cite[Proof of Lemma~5.2]{GOS01}. We now turn to  the proof of  \eqref{eq:potkernel_main} which we divide into several steps.

  \smallskip 
  \emph{Step 1.} Let $\al \in (0,1)$ be arbitrary.  For $t < N_1(\om)$, using the symmetry of the heat kernel, we have the trivial bound
  \begin{align*}
    \big| p_t^{\om}(0, 0) - p_t^{\om}(0, \la_n(x) ) \big| 
    \;\leq\;
    p_t^{\om}(0,0) + p_t^{\om}(\la_n(x), 0) 
    \;\leq\; 
    2\, \mu^{\om}(0)^{-1},
  \end{align*}
  and for $t \geq N_1(\om)$ we use the on-diagonal part of the estimate in \eqref{eq:GB} to obtain 
  \begin{align} \label{term1}
    &\int_0^{n^\al}\!%
      \big| p_t^{\om}(0, 0) - p_t^{\om}(0, \la_n(x)) \big| \,
    \md t
    \nonumber\\[.5ex]  
    &\mspace{36mu}\leq\;
    2 \mu^{\om}(0)^{-1} N_1(\om) +   
    \int_{N_1(\om)}^{n^\al}\!%
      \big( p_t^{\om}(0, 0) - p_t^{\om}(0, \la_n(x)) \big)\, 
    \md t 
    \nonumber \\[1ex]
    &\mspace{36mu}\leq\; 
    2 \mu^{\om}(0)^{-1} N_1(\om) + c_{9}\, \al\, \ln n.
  \end{align}

  \emph{Step~2.} Let $n \geq N_0(\om)\vee N_2(\om)$ so that $d^\om(0, \la_n(x)) \leq c_{4} n$ for all $x \in K$  by Lemma~\ref{lem:contr_lambda}, and let $\sqrt{t} >  N\ldef  c_{4} \vee  N_1(\om) \vee N_2(\om)$. Then, we use the H\"older regularity in \eqref{eq:hoelderreg} with the choice $T = n^2 t$, $R = c_{4} n$, $x_1 = 0$, $x_2 = \la_n(x)$ and again by the on-diagonal part of \eqref{eq:GB} to obtain
  \begin{align} \label{eq:appl_hoelderreg}
    n^2\, \big| p_{n^2t}^{\om}(0, 0) -p_{n^2 t}^{\om}(0, \la_n(x)) \big|
    &\;\leq\; 
    \frac{c n^2}{t^{\vr/2}}\,
    \max_{(s,y) \in [\frac{3}{4} T_0 , T_0] \times B^\om(0, R_0 / 2)} 
    p_s^{\om}(0, y) 
    \;\leq\;
    \frac{c}{t^{1+\vr/2}},
  \end{align}
  so that
  \begin{align} \label{term2}
    \int_{N n^2}^{\infty}\!%
      \big| p_t^{\om}(0, 0) - p_t^{\om}(0, \la_n(x)) \big|\,
    \md t
    &\;=\;
    \int_{N}^{\infty}\!%
      n^2\, \big| p_{n^2t}^{\om}(0, 0) - p_{n^2t}^{\om}(0, \la_n(x)) \big|\,
    \md t
    \nonumber\\[1ex]
    &\;\leq\;
    c\, \int_{N}^{\infty} \frac{1}{t^{1+\vr/2}}\, \md t 
    \;<\; 
    \infty.
  \end{align}

  \emph{Step 3.} In this step we will show that $\prob_0$-a.s.
  \begin{align} \label{eq:step2}
    \limsup_{n \to \infty}\; \sup_{x \in K}
    \Bigg|
      \frac{1}{\ln n}\,
      \int_{n^\al}^{N n^2}\!%
        \big( p_t^{\om}(0, 0) - p_t^{\om}(0, \la_n(x)) \big)\,
      \md t
      \,-\,
      \bar{g}
    \Bigg| 
    \;\leq \;
    \frac{\bar g}{2} \, \al.
  \end{align}
  The integral can be decomposed into
  \begin{align} \label{eq:decomp_step2}
    &\int_{n^\al}^{N n^2}\!%
      \big( p_t^{\om}(0, 0) - p_t^{\om}(0, \la_n(x)) \big)\,
    \md t  
    \\[.5ex]  
    &\mspace{36mu}=\;  
    \int_{n^{\al-2}}^{N}\!%
      t^{-1} \Big(n^2t\, p_{n^2t}^{\om}(0, 0) - \frac{\bar g}{2} \Big)\,
    \md t 
    \,+\,   
    \frac{\bar g}{2} \,     
    \int_{n^{\al-2}}^{N}\! t^{-1}\, \md t  
    \,-\, 
    \int_{n^{\al}}^{N n^2} p_t^{\om}(0, \la_n(x))\, \md t. \nonumber
  \end{align}
  By the local limit theorem in \eqref{eq:llt}, for any $\de > 0$ there exists $N_3(\om) = N_3(\om, \de)$ such that $|s\, p_s^{\om}(0,0) - \bar g /2| \leq \de$ for all $s \geq N_3(\om)$.  Hence, for $n$ such that $n^{\al} > N_3(\om)$,
  \begin{align} \label{eq:term1}
    \int_{n^{\al-2}}^{N}\mspace{-6mu}
      t^{-1}\, \bigg| n^2t\, p_{n^2t}^{\om}(0, 0) - \frac{\bar g}{2} \bigg|\, 
    \md t 
    \;\leq\; 
    \de \int_{n^{\al-2}}^{N}\mspace{-6mu} t^{-1}\, \md t  
    \;=\;
    \de \big( \ln N + (2-\al) \ln n \big).
  \end{align}  
  Moreover,
  \begin{align} \label{eq:term2}
    \lim_{n \to \infty} 
    \bigg| 
      \frac{1}{\ln n}\, \frac{\bar{g}}{2}\,
      \int_{n^{\al-2}}^{N}\! t^{-1}\, \md t 
      \,-\, 
      \bar{g}
    \bigg|
    \;=\;
    \frac{\bar g}{2} \, \al.     
  \end{align} 
  Let now $\be \in (\al, 2)$ be arbitrary.  Then, for the last term in \eqref{eq:decomp_step2} we get 
  \begin{align} \label{eq:int_middle}
    \int_{n^{\al}}^{N n^2}\! p_t^{\om}(0, \la_n(x))\, \md t 
    &\;=\;
    \int_{n^{\al}}^{n^\be}\! p_t^{\om}(0, \la_n(x))\, \md t
    \,+\,
    \int_{n^{\be}}^{N n^2}\! p_t^{\om}(0, \la_n(x))\, \md t  \nonumber
    \\[1ex]
    &\;\leq\; 
    \int_{n^{\al}}^{n^\be} p_t^{\om}(0, \la_n(x))\, \md t 
    \,+\, 
    c_{6} \big( \ln N + (2-\be) \ln n \big),
  \end{align}
  where we used again the on-diagonal part of \eqref{eq:GB} in the last step.  By Lemma~\ref{lem:contr_lambda}, $d^{\om}(0,\la_n(x)) \geq c_{3} n$ for any $x \in K$ if $n \geq N_0$ , so for such $n$ and  $t \in (n^{\al}, n^{\be})$,
  \begin{align*} 
    p_t^{\om}\big(0, \la_n(x)\big) 
    \;\leq\; 
    c 
    \begin{cases}
      n^{-1} \me^{-c n^{2-\be}}
      &\text{if } t \in \big[ c_{5} \, d^{\om}(0,\la_n(x)), n^{\be} \big],
      \\
      n^{-\al} \me^{-c n}
      &\text{if } t \in \big[n^{\al}, c_{5} \, d^{\om}(0,\la_n(x)) \big].
    \end{cases} 
  \end{align*}
  In view of \eqref{eq:hole2} these bounds immediately imply 
  \begin{align} \label{eq:off_decay}
    \lim_{n \to \infty} \sup_{x \in K} 
    \frac{1}{\ln n}\,
    \int_{n^{\al}}^{n^\be}\! p_t^{\om}\big(0, \la_n(x)\big)\, \md t
    \;=\; 
    0, 
  \end{align}
  and in combination with \eqref{eq:int_middle} this yields
  \begin{align}  \label{eq:term3}
    \limsup_{n\to\infty} \sup_{x\in K} 
    \frac{1}{\ln n}\, \int_{n^{\al}}^{N n^2}\! p_t^{\om}(0, \la_n(x))\, \md t 
    \;\leq\; 
    c_{6} \, (2 - \be).
  \end{align}
  Now we combine \eqref{eq:decomp_step2} with \eqref{eq:term1}, \eqref{eq:term2} and \eqref{eq:term3} to obtain
  \begin{align*}
    &\limsup_{n \to \infty}\; \sup_{x \in K}
    \Bigg|
      \frac{1}{\ln n}\,
      \int_{n^\al}^{N n^2}\!%
        \big( p_t^{\om}(0, 0) - p_t^{\om}(0, \la_n(x)) \big)\,
      \md t
      \,-\,
      \bar{g}
    \Bigg|
   \\
   &\mspace{36mu}\leq\;
   \de (2-\al) +  \frac{\bar{g}}{2}\, \al + c_{6} (2-\be),
  \end{align*}
  and by taking the limits $\de \downarrow 0$ and $\be \uparrow 2$ we get \eqref{eq:step2}.  
  
  \emph{Step 4.} To conclude, note that the combination of \eqref{term1}, \eqref{term2} and \eqref{eq:step2} yields
  \begin{align*}
    \limsup_{n \to \infty}\; \sup_{x \in K}
    \Bigg|
      \frac{1}{\ln n}\,
      \int_{0}^{\infty}\!%
        \big( p_t^{\om}(0, 0) - p_t^{\om}(0, \la_n(x)) \big)\,
      \md t
      \,-\,
      \bar{g}
    \Bigg| 
    \;\leq \;
    c_{9} \al +  \frac{\bar{g}}{2} \, \al ,
  \end{align*}
  and by taking $\al \downarrow 0$ we get \eqref{eq:potkernel_main}.
\end{proof}

\subsection{Proof of Theorem~\ref{thm:green_killed}}
Let $d=2$. The result  will follow  from Theorem~\ref{thm:green1}  and the following relations between the potential kernel and the Green's function of the random walk killed upon exiting a  set $A$ (cf.\ \cite[Proposition~4.6.2(b) and Proposition~4.6.3]{LL10} for the case of a simple random walk in discrete time). 
\begin{lemma} \label{lem:potgreen}
  \begin{enumerate}[(i)]
  \setlength{\itemindent}{0ex}
  \item  $\prob_0$-a.s., for any finite set $A \subset \bbZ^2$ we have for all $x, y \in \cC_{\infty}(\om)$,
    \begin{align} \label{eq:ident_greenA}
      g^{\om}_A(x,y) 
      \;=\; 
      \Mean_x^{\om}\!\big[ a^{\om}(X_{\tau_A}, y) \big] - a^{\om}(x,y).
    \end{align}
  \item  Suppose that  Assumption~\ref{ass:static_percolation} is satisfied.  Then, $\prob_0$-a.s., for any  set $A \subsetneq \cC_{\infty}(\om)$  and all $x, y \in \cC_{\infty}(\om)$,
    \begin{align} \label{eq:ident_greenA2}
      g^{\om}_A(x,y) 
      \;=\; 
      \Mean_x^{\om}\!\big[a^{\om}(X_{\tau_A}, y) \big]
      - a^{\om}(x,y) + f^\om_A(x),
    \end{align}
    where $f^{\om}_A(x) \ldef \lim_{n \to \infty} \bar{g}\, \Prob_x^{\om}\!\big[\tau_{B(0,n)}<\tau_A\big]\, \ln n$.
  \end{enumerate} 
\end{lemma}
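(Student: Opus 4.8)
The plan is to read both identities off the fact, stated in the introduction, that $x \mapsto a^\om(x,y)$ is a fundamental solution of $\cL_\th^\om u = \indicator_{\{y\}}/\th^\om(y)$, by applying Dynkin's formula to the walk stopped at $\tau_A$; for the infinite set in (ii) I will exhaust $A$ by finite sets and read off the correction term $f^\om_A$ from Theorem~\ref{thm:green1}. For (i), fix $y \in \cC_\infty(\om)$ and a finite $A \subset \bbZ^2$. Since the walk is recurrent and $A$ is finite, $\tau_A < \infty$ $\Prob_x^\om$-a.s., and before $\tau_A$ the walk stays in the finite set $A \cup \partial A$, on which $a^\om(\cdot, y)$ is bounded. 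Dynkin's formula then applies to the stopped process and, using $\cL_\th^\om a^\om(\cdot,y) = \indicator_{\{y\}}/\th^\om(y)$, shows that
\[
  M_t \ldef a^\om\big(X_{t \wedge \tau_A}, y\big) - \int_0^{t \wedge \tau_A} \frac{\indicator_{\{X_s = y\}}}{\th^\om(y)}\, \md s
\]
is a $\Prob_x^\om$-martingale. Letting $t \to \infty$ and using dominated convergence for the first term (bounded, finitely many values) together with monotone convergence for the occupation-time integral, whose expectation increases to $g_A^\om(x,y)$, the identity $a^\om(x,y) = \Mean_x^\om[M_0] = \lim_{t\to\infty}\Mean_x^\om[M_t]$ rearranges to \eqref{eq:ident_greenA}.

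For (ii), I would exhaust the infinite set by $A_n \ldef A \cap B(0,n)$, so that $\tau_{A_n} = \tau_A \wedge \tau_{B(0,n)}$, and apply (i) to get $g_{A_n}^\om(x,y) + a^\om(x,y) = \Mean_x^\om[a^\om(X_{\tau_{A_n}}, y)]$. As $n \to \infty$ the walk does not explode, so $\tau_{B(0,n)} \uparrow \infty$ and $\tau_{A_n} \uparrow \tau_A$; hence the occupation times increase and $g_{A_n}^\om(x,y) \uparrow g_A^\om(x,y) < \infty$ by monotone convergence, forcing the right-hand side to converge. I then split it according to the exit location,
\[
  \Mean_x^\om\big[a^\om(X_{\tau_{A_n}}, y)\big] = \Mean_x^\om\big[a^\om(X_{\tau_A}, y);\, \tau_A < \tau_{B(0,n)}\big] + \Mean_x^\om\big[a^\om(X_{\tau_{A_n}}, y);\, \tau_{B(0,n)} \le \tau_A\big] \rdef I_n + II_n.
\]
Because $\cC_\infty(\om) \setminus A \neq \emptyset$, recurrence gives $\tau_A < \infty$ a.s., so $\{\tau_A < \tau_{B(0,n)}\}$ increases to a full-measure event; since $a^\om(\cdot,y)$ is bounded from below (it is finite on every bounded set and tends to $+\infty$ at infinity by Theorem~\ref{thm:green1}), monotone convergence yields $I_n \to \Mean_x^\om[a^\om(X_{\tau_A}, y)]$, which is finite because $II_n$ is bounded below and the total limit is finite.

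It remains to identify $\lim_n II_n$ with $f^\om_A(x)$. On $\{\tau_{B(0,n)} \le \tau_A\}$ the exit point satisfies $|X_{\tau_{A_n}}| \in [n, n + c]$, and I claim $a^\om(z,y)/\ln n \to \bar g$ uniformly over such $z$. This is where Theorem~\ref{thm:green1} enters: from the definition of $a^\om$ and the covariance relation $p_t^\om(z,y) = p_t^{\tau_y\om}(0, z-y)$ one gets $a^\om(z,y) = a^{\tau_y \om}(0, z - y) + a^\om(y,y)$, which reduces the claim to Theorem~\ref{thm:green1} for the shifted environment $\tau_y \om$ (the deterministic constant $\bar g$ being unchanged), valid simultaneously for all $y \in \cC_\infty(\om)$ off a single $\prob_0$-null set by countability. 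Consequently $II_n = \bar g \ln n\, \Prob_x^\om[\tau_{B(0,n)} \le \tau_A]\,(1 + o(1))$, so the convergence of $II_n$ forces $\bar g \ln n\, \Prob_x^\om[\tau_{B(0,n)} < \tau_A]$ to converge to the same finite limit; that is, $f^\om_A(x)$ exists and $II_n \to f^\om_A(x)$. Adding the two limits gives \eqref{eq:ident_greenA2}. I expect the main obstacle to be precisely this last step: transferring the base-point-$0$ asymptotics of Theorem~\ref{thm:green1} to $a^\om(\cdot, y)$ \emph{uniformly} over the exit boundary $\partial B(0,n)$, and controlling the resulting error as a genuine multiplicative $o(1)$ so that the indeterminate product $\ln n \cdot \Prob_x^\om[\tau_{B(0,n)} \le \tau_A]$ resolves to a finite limit. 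Note also that $f^\om_A$ is not constructed directly but only a posteriori, its existence being a byproduct of the convergence of $I_n$ and of the total expectation.
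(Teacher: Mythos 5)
Your proof is correct and follows essentially the same route as the paper: part (i) is the paper's own argument verbatim (the local martingale $M_t$ from the fundamental-solution property, optional stopping at $t\wedge\tau_A$, then dominated plus monotone convergence), and for part (ii) the paper simply cites the exhaustion argument of \cite[Proposition~4.6.3]{LL10} together with Theorem~\ref{thm:green1}, which is exactly the $A_n = A\cap B(0,n)$ decomposition and boundary-term identification you carry out in detail. The only difference is that you spell out the step the paper leaves to the reference, including the translation $a^{\om}(z,y)=a^{\tau_y\om}(0,z-y)+a^{\om}(y,y)$ needed to apply Theorem~\ref{thm:green1} uniformly over the exit boundary.
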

\begin{proof}
  (i) Recall that, for any $y \in \cC_{\infty}(\om)$ fixed, $h(z) = a^{\om}(z, y)$ is a fundamental solution of $\cL^{\om} u = \indicator_{\{y\}} / \mu^{\om}(y)$ on $\cC_{\infty}(\om)$, and, under $\Prob_x^{\om}$, the process $(M_t : t \geq 0)$ defined by
  \begin{align*}
    M_t
    \;\ldef\; 
    h(X_t) - \int_0^t \cL^{\om} h(X_s)\, \md s 
    \;=\; 
    h(X_t) - \int_0^t \frac{\indicator_{\{X_s = y\}}}{\mu^{\om}(y)}\, \md s
  \end{align*}
  is a local martingale.  In particular,
  \begin{align*}
    a^{\om}(x, y) 
    \;=\; 
    \Mean_x^{\om}\!\big[ M_0 \big] 
    &\;=\; 
    \Mean_x^{\om}\!\big[ M_{t \wedge \tau_A} \big] 
    \\[0.5ex]
    &\;=\;  
    \Mean_x^{\om}\!\big[ a^{\om}(X_{t \wedge \tau_A}, y) \big] 
    \,-\, 
    \Mean_x^{\om}\!\bigg[%
      \int_0^{t \wedge \tau_A}
        \frac{\indicator_{\{X_s=y\}}}{\mu^{\om}(y)}\,
      \md s 
    \bigg].
  \end{align*}
  Since $A$ is finite, by the dominated convergence theorem
  \begin{align*}
    \lim_{t \to \infty}
    \Mean_x^{\om}\!\big[ a^{\om}(X_{t \wedge \tau_A},y) \big] 
    \;=\; 
    \Mean_x^{\om}\!\big[ a^{\om}(X_{\tau_A},y) \big]
  \end{align*}
  and by the monotone convergence theorem
  \begin{align*}
    \lim_{t \to \infty} 
    \Mean_x^{\om}\!\bigg[%
    \int_0^{t \wedge \tau_A}
      \frac{\indicator_{\{X_s=y\}}}{\mu^{\om}(y)}\,
    \md s 
    \bigg] 
    \;=\; 
    \Mean_x^{\om}\!\bigg[%
      \int_0^{\tau_A} \frac{\indicator_{\{X_s=y\}}}{\mu^{\om}(y)}\, \md s 
    \bigg] 
    \;=\; 
    g^{\om}_A(x, y),
  \end{align*}
  which finishes the proof of (i). Statement (ii) follows from Theorem~\ref{thm:green1} by the same arguments as \cite[Proposition~4.6.3]{LL10}.
\end{proof}
\begin{corro} \label{cor:greenZd}
  Suppose that  Assumption~\ref{ass:static_percolation} is satisfied.  Set $A \ldef (\bbZ^2 \cap \cC_{\infty}(\om) )\setminus \{0\}$.  Then, $\prob_0$-a.s., for all $x,y \in \cC_{\infty}(\om)$,
  \begin{align} \label{eq:greenZdminus0}
    f_{A}^{\om}(x) \;=\; a^{\om}(x,0),
    \qquad 
    g_{A}^{\om}(x,y)
    \;=\;
    a^{\om}(0,y) \,-\,  a^{\om}(x,y) \,+\, a^{\om}(x,0),
  \end{align}
  in particular, $g_{A}^{\om}(x,x) = 2 a^{\om}(0,x) - a^{\om}(x,x)$.  Moreover, $\prob_0$-a.s., for all $x \in \cC_{\infty}(\om)$,
  \begin{align} \label{eq:limGreenZd}
    \lim_{|y|\to\infty,\, y \in \cC_{\infty}(\om)} g_{A}^{\om}(x,y)
    \;=\;
    a^{\om}(x,0).
  \end{align}
\end{corro}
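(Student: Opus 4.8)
The plan is to read off the statement from the identity \eqref{eq:ident_greenA2} of Lemma~\ref{lem:potgreen}(ii) together with the logarithmic asymptotics of Theorem~\ref{thm:green1}. First I would note that for $A=(\bbZ^2\cap\cC_\infty(\om))\setminus\{0\}$ one has $\cC_\infty(\om)\setminus A=\{0\}$, so that $\tau_A$ is precisely the hitting time of $0$. Since the walk is recurrent in $d=2$ under Assumption~\ref{ass:static_percolation}, $\tau_A<\infty$ and $X_{\tau_A}=0$ hold $\Prob_x^\om$-a.s.\ for every $x\in\cC_\infty(\om)$ (and $\tau_A=0$ for $x=0$, which is consistent). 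Hence $\Mean_x^\om[a^\om(X_{\tau_A},y)]=a^\om(0,y)$, and \eqref{eq:ident_greenA2} collapses to $g_A^\om(x,y)=a^\om(0,y)-a^\om(x,y)+f_A^\om(x)$, so that everything reduces to evaluating $f_A^\om(x)$.

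To show $f_A^\om(x)=a^\om(x,0)$ I would run the martingale argument of Lemma~\ref{lem:potgreen}(i), but localized at the exit of a large ball. Fix $x\neq 0$, set $\sigma_n\ldef\tau_{B(0,n)}\wedge\tau_A$, and recall that $z\mapsto a^\om(z,0)$ is a fundamental solution of $\cL_\th^\om u=\indicator_{\{0\}}/\th^\om(0)$. Thus $M_{t\wedge\sigma_n}=a^\om(X_{t\wedge\sigma_n},0)$ is a bounded martingale, since $X_s\neq 0$ for $s<\sigma_n$ (as $\sigma_n\le\tau_A$) kills the drift term and $X$ stays in the finite set $\overline{B(0,n)}$. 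Optional stopping together with $a^\om(0,0)=0$ gives $a^\om(x,0)=\Mean_x^\om[a^\om(X_{\tau_{B(0,n)}},0);\,\tau_{B(0,n)}<\tau_A]$. Every exit point $w=X_{\tau_{B(0,n)}}$ lies in $\cC_\infty(\om)$ with $|w|=n$, hence $w=\la_n(w/n)$ with $w/n$ on the unit circle; applying Theorem~\ref{thm:green1} on a thin annulus around radius one yields $a^\om(w,0)=(\bar{g}+o(1))\ln n$ uniformly over such $w$. Therefore $a^\om(x,0)=(\bar{g}+o(1))\ln n\,\Prob_x^\om[\tau_{B(0,n)}<\tau_A]$, and comparison with the definition of $f_A^\om$ gives $f_A^\om(x)=\lim_{n\to\infty}\bar{g}\,\Prob_x^\om[\tau_{B(0,n)}<\tau_A]\ln n=a^\om(x,0)$, which is the first identity in \eqref{eq:greenZdminus0}. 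Plugging this back produces $g_A^\om(x,y)=a^\om(0,y)-a^\om(x,y)+a^\om(x,0)$, and the diagonal relation $g_A^\om(x,x)=2a^\om(0,x)-a^\om(x,x)$ follows from the symmetry $a^\om(0,x)=a^\om(x,0)$.

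For the limit \eqref{eq:limGreenZd} it then remains to prove that $a^\om(0,y)-a^\om(x,y)\to 0$ as $|y|\to\infty$. Writing out the definition of the potential kernel, the reference term $p_t^\om(0,0)$ cancels and, by symmetry of the heat kernel, $a^\om(0,y)-a^\om(x,y)=\int_0^\infty\big(p_t^\om(y,x)-p_t^\om(y,0)\big)\,\md t$, the integral being absolutely convergent. I would estimate it by splitting at the diffusive scale $t\simeq\ve|y|^2$. On $\{t\le\ve|y|^2\}$ the off-diagonal Gaussian bound \eqref{eq:GB} makes $\int_0^{\ve|y|^2}\big(p_t^\om(y,x)+p_t^\om(y,0)\big)\,\md t$ smaller than a quantity $\de(\ve)$ that is uniform in $y$ and vanishes as $\ve\to 0$. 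On $\{t\ge\ve|y|^2\}$ I would invoke the parabolic Hölder estimate behind \eqref{eq:hoelderreg}, applied on a space-time cylinder around the origin at scale $\sqrt t$ (where $p_\cdot^\om(y,\cdot)$ is caloric, its pole at $(0,y)$ being far away), to obtain $|p_t^\om(y,x)-p_t^\om(y,0)|\le c\,|x|^\vr t^{-1-\vr/2}$ with the genuine separation $|x|$; integrating yields a bound of order $(|x|/|y|)^\vr$. Letting first $|y|\to\infty$ and then $\ve\to 0$ gives $a^\om(0,y)-a^\om(x,y)\to 0$, whence $g_A^\om(x,y)\to a^\om(x,0)$.

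The main obstacle is exactly this last step. Theorem~\ref{thm:green1} controls only the leading order $\bar{g}\ln|y|$, and since in a genuinely random environment $a^\om(x,x)\neq 0$, the vanishing of the $O(1)$ difference $a^\om(0,y)-a^\om(x,y)$ cannot be extracted from the logarithmic asymptotics and truly requires the diffusive-scale cancellation above; equivalently, one may use the full off-diagonal local limit theorem of \cite{ADS16,CS19} to get the pointwise cancellation $n^2\big(p^\om_{n^2 s}(y,x)-p^\om_{n^2 s}(y,0)\big)\to 0$ and then dominate via \eqref{eq:hoelderreg}. A secondary point to watch is that $a^\om(X_t,0)$ is not uniformly integrable (the two-dimensional walk makes far excursions before hitting $0$), so the localization at $\tau_{B(0,n)}$ in the second paragraph is essential and must be combined with the uniform boundary asymptotics rather than with a direct optional stopping at $\tau_A$.
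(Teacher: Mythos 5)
Your proposal is correct and follows the same overall skeleton as the paper's proof: apply \eqref{eq:ident_greenA2} with $X_{\tau_A}=0$, identify $f_A^{\om}(x)=a^{\om}(x,0)$, and then show $a^{\om}(0,y)-a^{\om}(x,y)\to 0$ by dominated convergence using the Gaussian bounds and the H\"older regularity. The one place where you diverge is the identification of $f_A^{\om}$: the paper gets it in one line by substituting $y=0$ into \eqref{eq:ident_greenA2} and observing that $g_A^{\om}(x,0)=0$ (the walk killed upon leaving $A$ spends no time at $0$, since $0\notin A$), together with $a^{\om}(0,0)=0$. Your route --- optional stopping of $a^{\om}(X_{\cdot},0)$ at $\tau_{B(0,n)}\wedge\tau_A$ combined with the uniform boundary asymptotics $a^{\om}(w,0)=(\bar g+o(1))\ln n$ from Theorem~\ref{thm:green1} --- is also valid, and has the virtue of making explicit why the limit defining $f_A^{\om}$ exists; but it essentially reproves the mechanism already packaged into Lemma~\ref{lem:potgreen}(ii), which you are entitled to quote, so it costs more work than necessary. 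Your final remark is well taken: the vanishing of the $O(1)$ quantity $a^{\om}(0,y)-a^{\om}(x,y)$ indeed cannot be read off from the logarithmic asymptotics alone, and your explicit splitting at the diffusive scale is a correct way to justify the dominated convergence that the paper invokes more tersely.
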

\begin{remark}
  Corollary~\ref{cor:greenZd} extends the formula $g_{\bbZ^2 \setminus \{0\}}(x,x) = 2 a(0,x)$ being valid in the setting of a space-homogeneous random walk on $(\bbZ^2, E_2)$, where $a(x,x) = a(0,0) = 0$, see \cite[Equation~(4.31)]{LL10}.
\end{remark}

\begin{proof}
  In view of \eqref{eq:ident_greenA2} we have
  \begin{align*} 
    g^{\om}_{A}(x,y) 
    \;=\;
    a^{\om}(0,y) - a^{\om}(x,y) + f^{\om}_{A}(x).
  \end{align*}
  In particular, for $y=0$, noting that $g_{A}^{\om}(x,0) = 0$ by its definition, we get $f_{A}^{\om}(x) = a^{\om}(x,0)$.  Hence, \eqref{eq:greenZdminus0} follows directly from \eqref{eq:ident_greenA2}.  By applying \eqref{eq:greenZdminus0} with $x=y$ and using the symmetry of $a^{\om}$ we get $g_{A}^{\om}(x,x) = 2 a^\om(0,x) - a^{\om}(x,x)$.  Finally, to see \eqref{eq:limGreenZd} note that $\lim_{y} p^{\om}_t(z,y) = 0$ for any $z \in \cC_\infty(\om)$ and all $t > 0$, so that
  \begin{align*}
    \lim_{y}
    \big( a^{\om}(0,y) - a^{\om}(x,y) \big)
    \;=\;
    \lim_{y}
    \int_0^\infty \big( p^{\om}_t(0,y) - p^{\om}_t(x,y) \big)\, \md t
    \;=\;
    0
  \end{align*}
  as $|y| \to \infty$ with $y \in \cC_{\infty}(\om)$ by an application of the dominated convergence theorem, which can be justified  by using again the H\"older-regularity in conjunction with the near-diagonal heat kernel estimate.
\end{proof}
\begin{proof}[Proof of Theorem~\ref{thm:green_killed}]
  For any $x, y, z \in \cC_{\infty}(\om)$ such that $x+z, y+z \in \cC_{\infty}(\om)$ we have  $ g_{B(0,n)}^{\tau_z \om}(x,y) = g_{B(z,n)}^{\om}(x+z,y+z)$.  Hence, it suffices to consider the case $z=0$, otherwise we may replace $\om$ by $\tau_z \om$. 
  
  (i) We first show  \eqref{eq:green_killed_ondiag} in the case $x=0$, that is,
  \begin{align} \label{eq:greenkilledondiag0}
    \lim_{n \to \infty}\, \frac{1}{\ln n}\, g_{B(0, n)}^{\om}(0, 0)
    \;=\;
    \bar{g}.
  \end{align}
  Note that $a^{\om}(0, 0) = 0$ and  by \eqref{eq:ident_greenA},
  \begin{align*}
    g^{\om}_{B(0,n)}(0, 0) 
    \;=\; 
    \Mean_0^{\om}\!\big[ a^{\om}(0, X_{\tau_{B(0,n)}}) \big]
    \;=\;  
    \Mean_0^{\om}\!\bigg[%
      \int_0^{\infty}  
        \big( p_t^{\om}(0, 0) - p_t^{\om}(0, X_{\tau_{B(0,n)}}) \big)\, 
      \md t 
    \bigg].
  \end{align*}
  Further, notice that, for every $n$, $X_{\tau_{B(0,n)}} = n y_n = \la_n(y_n)$ for some $y_n$ contained in the annulus $K = \{ u \in \bbR^2 : \frac{1}{2} \leq |u| \leq 2 \}$.  Hence,
  \begin{align*}
    \Big| \frac{1}{\ln n}\, g^{\om}_{B(0,n)}(0, 0) \,-\, \bar g \Big|  
    \;\leq\; 
    \sup_{u \in K} 
    \bigg|
      \frac{1}{\ln n}\,
      \int_0^{\infty}\!%
      \big( p_t^{\om}(0, 0) - p_t^{\om}(0, \la_n(u)) \big)\,
      \md t
      \,-\,
     \bar g
    \bigg|,            
  \end{align*}
  and \eqref{eq:greenkilledondiag0} follows from Theorem~\ref{thm:green1}.  Now, for any  $\de \in (0,1)$ and $x \in \cC_{(1-\de)n}(0,\om)$,
  \begin{align*}
    g^{\om}_{B(x, \delta n/2)} (x,x)
    \;\leq\;
    g^{\om}_{B(0, n)}(x,x)
    \;\leq\;
    g^{\om}_{B(x, 2n)} (x,x)
  \end{align*}
  for $n$ sufficiently large.  Thus, \eqref{eq:green_killed_ondiag} can be derived from \eqref{eq:greenkilledondiag0}.
   
  (ii) Again by \eqref{eq:ident_greenA},
  \begin{align*}
    g^{\om}_{B(0, n)}(x,y)
    &\;=\; 
    \Mean_x^{\om}\!\big[ a^{\om}(y, X_{\tau_{B(0,n)}}) \big] - a^{\om}(y, x)
    \\[.5ex]
    &\;=\;
    \sum_{x' \in \partial B(0,n) \cap \cC_\infty(\om)} 
    \Prob_x^{\om}\!\big[ X_{\tau_{B(0,n)}}=x' \big]\, a^\om(y,x')
    \,-\, a^\om(y,x)
    \\[.5ex]
    &\;=\;
    \sum_{x' \in \partial B(0,n) \cap \cC_\infty(\om)} 
    \Prob_x^{\om}\!\big[ X_{\tau_{B(0,n)}} = x' \big]\, a^{\tau_y \om}(0,x'-y)
    \,-\,  a^{\tau_y \om}(0,x-y).
  \end{align*}
  Recall that $y \in B(0,(1-\de)n) \cap \cC_\infty(\om)$.  Note that, for any $x' \in \partial B(0,n) \cap \cC_\infty(\om)$, $x'-y \in \cC_\infty(\tau_y \om)$ and thus $x'-y = \la_n(y_n)$ for some $y_n \in K \ldef \{u \in \bbR^2 : |u| \in [\delta, 2]\}$.  Hence,
  \begin{align*}
    \sup_{x' \in \partial B(0,n) \cap \cC_\infty(\om)}
    a^{\tau_y \om}(0,x'-y)
    \;\leq\;
    \bar{g}\, \ln n \,+\, R_0^{\tau_y \om}(n),
  \end{align*}
  where $R_0^{\om}\!: \bbN \to [0, \infty)$ is defined as
  \begin{align*}
    R_0^{\om}(n)
    \;\ldef\;
    \sup_{u\in K}
    \Big| a^{\om}\big(0, \la_n(u) \big) \,-\, \bar g \, \ln n \Big|.
  \end{align*}
  Note that $R_0^{\om}(n)/\ln n \to 0$ as $n \to \infty$ for $\prob_0$-a.e.\ $\om$ by Theorem~\ref{thm:green1}. 

  Similarly, setting $N_{xy} \ldef |x-y|$, we may write $x-y \in \cC_\infty(\tau_y \om)$ as $x-y = N_{xy} (x-y)/|x-y| = \la_{N_{xy}}(v)$ with $v = (x-y)/|x-y| \in K$.  Thus,
  \begin{align*}
    a^{\tau_y \om}(0,x-y)
    \;=\;
    a^{\tau_y \om}\big(0, \la_{N_{xy}} (v) \big) 
    \;\geq\;
    \bar{g} \ln\!\big( |x-y| \big) - R_0^{\tau_y \om}(|x-y|).
  \end{align*}
 The combination of the above estimates gives
  \begin{align*}
    g^{\om}_{B(0,n)} (x,y)
    \;\leq\;
    \bar{g}\, \ln\! \Big( \frac{n}{|x-y|} \Big)
    \,+\, R_0^{\tau_y \om}(n) \,+\, R_0^{\tau_y \om}\big(|x-y|\big).
  \end{align*}
  Using a symmetry argument we can replace $ R_0^{\tau_y \om}$ by $R_{x,y}^\om\ldef \frac 1 2 (R_0^{\tau_x \om} + R_0^{\tau_y \om})$.  The corresponding lower bound follows by the same arguments.
\end{proof}

\section{Potential kernel asymptotics for the dynamic RCM} \label{sec:dyn}

\subsection{Setting and results}
In this section we consider the \emph{dynamic} random conductance model.  Let now $\Om$ be the set of measurable functions from $\bbR$ to $(0, \infty)^{E_2}$ equipped with a $\si$-algebra $\cF$ and let $\prob$ be a probability measure on $(\Om, \cF)$.  We will refer to $\om_t(e)$ as the \emph{time-dependent conductance} of the edge $e \in E_2$ at time $t \in \bbR$.  A \emph{space-time shift} by $(s, z) \in \bbR \times \bbZ^2$ is the map $\tau\!: \Om \to \Om$,
\begin{align*}
  (\tau_{s,z}\, \om)_t(\{x,y\})
  \;\ldef\;
  \om_{t+s}(\{x+z,y+z\}),
  \qquad  t \in \bbR,\; \{x,y\} \in E_2.
\end{align*}
The set $\{\tau_{t,x} : (t,x) \in \bbR \times \bbZ^2\}$ together with the operation $\tau_{t,x} \circ \tau_{s,y} = \tau_{t+s,x+y}$ defines the \emph{group of space-time shifts}. Throughout this section we assume that $\prob$ is space-time ergodic.  For any fixed realization $\om \in \Om$, consider  a \emph{time-inhomogeneous} Markov chain, $X = (X_t : t \geq 0)$, on $\bbZ^2$ with time-dependent generator acting on bounded functions $f: \bbZ^2 \to \bbR$ as 
\begin{align}
  \big(\cL_t^{\om} f\big)(x)
  \;=\;
  \sum_{y \sim x}\, \om_t(\{x,y\})\, \big(f(y) - f(x)\big).
\end{align}
 Note that, in contrast to \eqref{def:L},  the total jump rate out of any lattice site is not normalised, and the law of the sojourn time of $X$ depends on its time-space position, i.e.\  $X$ is the \emph{variable speed random walk (VSRW)} with the counting measure as a time-independent invariant measure. The results in this section, as many results on the dynamic RCM,  are restricted to this specific speed measure. We denote by $\Prob_{s,x}^{\om}$ the law of the process starting in $x \in \bbZ^2$  at time $s \geq 0$ and by $p^{\om}_{s,t}(x,y) \ldef \Prob_{s,x}^{\om}\big[X_t = y\big]$ for $x, y \in \bbZ^2$ and $t > s \geq 0$ the heat kernel. 
\begin{assumption}\label{ass:environment_dyn}
  \begin{enumerate}[(i)]
  \setlength{\itemindent}{0ex}
  \item $\prob$ is space-time ergodic, i.e.\ $\prob \circ\, \tau_{t,x}^{-1} \!= \prob\,$ for all $x \in \bbZ^2$, $t\in \bbR$, and $\prob[A] \in \{0,1\}\,$ for any $A \in \cF$ such that $\prob[A \triangle \tau_{t,x}(A)] = 0\,$ for all $x \in \bbZ^2$, $t \in \bbR$. 
  \item For every $A\in \mathcal{F}$ the mapping $(\om,t,x)\mapsto \indicator_A(\tau_{t,x}\om)$ is jointly measurable with respect to the $\sigma$-algebra $\mathcal{F}\otimes \mathcal{B}(\bbR)\otimes \mathcal{P}(\bbZ^2)$.    
  \item There exist $p, q \in (1, \infty]$ satisfying $1/(p-1) + 1/((p-1)q) + 1/q < 2/d$
    such that  $\mean\!\big[\om_t(e)^p\big]< \infty$ and $\mean\!\big[\om_t(e)^{-q}\big] < \infty$ for any $e \in E_2$ and $t \in \bbR$.
  \item \emph{Upper Gaussian heat kernel bounds.} 
    For $\prob$-a.e.\ $\om$, there exist $N_4(\om)$ and constants $c_i$ such that for any given $t$ with $t \geq N_4(\om)$ and all $y \in \bbZ^d$,
    \begin{align*} 
      p_{0,t}^{\om}(0, y)
      \;\leq\;
      c_{11}\,  t^{-1}\,
      \begin{cases}
        \exp\!\big( \!- c_{12}\, |y|^2/t \big),
        & \text{if $c_{10} |y| \leq  t$, }
        \\[.5ex]
        \exp\!\big( \!- c_{13}\, |y|\, \big(1 \vee \log (|y|/t) \big) \big),
        & \text{if $c_{10} |y| \geq t$.}
      \end{cases}
    \end{align*} 
  \end{enumerate}
\end{assumption}  
Under Assumption~\ref{ass:environment_dyn}-(i)--(iii) a quenched invariance principle has been shown in  \cite{ACDS18} (cf.\ also \cite{BR18}). H\"older regularity, near diagonal upper bounds and  a local limit theorem have been shown in \cite{ACS20}. The latter implies that $\lim_{n} n^2 p_{n^2}^{\om}(0,0) = (2 \pi \sqrt{\det \Si^2})^{-1}$, $\prob$-a.s., where $\Si^2$ is the covariance matrix of the Brownian motion in the invariance principle. 

The stronger bounds in Assumption~\ref{ass:environment_dyn}-(iv) will only be used to control the heat kernel in an intermediate time regime (cf.\ the proof of \eqref{eq:off_decay} above). Such bounds are known in the uniformly elliptic case, see \cite[Proposition~4.2]{DD05}.  For unbounded conductances, despite some partial result on the heat kernel decay (see \cite{MO16,GGM19}), the derivation of full Gaussian upper bounds is a subtle open challenge.  One reason is that the dynamic model is restricted to the VSRW.  For a constant speed version not even the invariance principle is known as no time change argument is available.  Even in the static RCM the heat kernel bounds for the VSRW  in \cite{ADS19} are not sufficient since in the degenerate case, in contrast to the CSRW, the intrinsic distance of the VSRW is not comparable to the Euclidean distance in general. However, in the special case of conductances bounded from above, both distances are comparable which  leads to the following example.
\begin{example}
  Set $\om^{*}(e) \ldef \sup_{t} \om_t(e)$, $e \in E_2$.  If $\sup_e \om^{*}(e)<\infty$ (i.e. $p=\infty$ in Assumption~\ref{ass:environment_dyn}-(iii)) and $\mean\!\big[\om_t(e)^{-q}\big] < \infty$ for $q>1$, then the heat kernel bounds in  Assumption~\ref{ass:environment_dyn}-(iv) follow from the arguments in \cite{ADS19}. 
\end{example}
\begin{theorem}\label{thm:green_dyn}
  Suppose that Assumption~\ref{ass:environment_dyn}  holds.  Then, the potential kernel
  \begin{align*}
    a^{\om}(x,y) 
    \;\ldef\; 
    \int_0^{\infty}\!%
      \big( p_{0,t}^{\om}(0, 0) - p_{0,t}^{\om}(x, y) \big)\,
    \md t,
    \qquad x,y \in \bbZ^2, 
  \end{align*}
  is well-defined, and  for any  $K = \{x \in \bbR^2 : |x| \in [k_1, k_2]\}$ with $0 < k_1 < k_2 < \infty$,
  \begin{align*}
    \lim_{n \to \infty}\; \sup_{x\in K} 
    \Big|
      \frac{1}{\ln n}\, a^\om (0, \la_n(x)) \big)    
      \,-\,
      \frac{1}{\pi \sqrt{\det \Si^2}}
    \Big| 
    \;=\; 
    0,
    \qquad \text{ $\prob$-a.s}.
  \end{align*}
\end{theorem}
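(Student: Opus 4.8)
The plan is to follow the proof of Theorem~\ref{thm:green1} essentially verbatim, exploiting that the dynamic setting considered here is structurally simpler: all conductances are strictly positive, so that $\cC_{\infty}(\om) = \bbZ^2$ and no percolation holes occur, and the walk $X$ is the VSRW associated with the counting measure, whence $p^{\om}_{0,t}(x,y) = \Prob^{\om}_{0,x}[X_t = y] \leq 1$. Moreover, both heat kernels entering $a^{\om}(0, \lfloor nx \rfloor)$ emanate from the origin at time $0$, so no symmetry of the kernel is needed. Three analytic inputs drive the argument: (a) the Gaussian upper bounds, which are precisely Assumption~\ref{ass:environment_dyn}(ii); (b) a diagonal local limit theorem; and (c) spatial Hölder regularity of the time-dependent heat kernel.

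For (b) I would use that, for $\prob$-a.e.\ $\om$,
\[
  \lim_{n \to \infty} n^2\, p^{\om}_{0, n^2}(0,0) \;=\; \frac{1}{2 \pi \sqrt{\det \Si^2}},
\]
the dynamic counterpart of \eqref{eq:llt}. This follows from the quenched invariance principle of \cite{ACDS18}, valid under the moment condition Assumption~\ref{ass:environment_dyn}(i), upgraded to a pointwise statement on the diagonal by means of the tightness provided by (a) and the equicontinuity provided by (c); the constant is identified by evaluating the Gaussian density with covariance matrix $\Si^2$ at the origin. For (c) I would invoke a parabolic Harnack inequality for the time-inhomogeneous generator $\cL^{\om}_t$, again available under Assumption~\ref{ass:environment_dyn}(i), yielding an estimate of the form $|p^{\om}_{0,T}(0,x_1) - p^{\om}_{0,T}(0,x_2)| \leq c\,(R/\sqrt{T})^{\vr}\, \max_{(s,y)} p^{\om}_{0,s}(0,y)$ for $x_1, x_2 \in B(0,R)$ and $\sqrt{T} \geq R \geq N(\om)$, mirroring \eqref{eq:hoelderreg}. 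Combined with the near-diagonal part of (a), this regularity also guarantees integrability of $t \mapsto p^{\om}_{0,t}(0,0) - p^{\om}_{0,t}(x,y)$ as in \cite[Proof of Lemma~5.2]{GOS01}, so that $a^{\om}$ is well-defined.

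With these inputs in hand, I would decompose the integral defining $a^{\om}(0, \lfloor nx \rfloor)$ over $[0, n^{\al}]$, $[n^{\al}, N n^2]$ and $[N n^2, \infty)$ for a small $\al > 0$, exactly as in the proof of Theorem~\ref{thm:green1}. On $[0, n^{\al}]$ the trivial bound $p^{\om}_{0,t}(0,0) + p^{\om}_{0,t}(0, \lfloor nx \rfloor) \leq 2$ for $t < N_4(\om)$ together with the on-diagonal estimate $p^{\om}_{0,t}(0,0) \leq c\, t^{-1}$ for $t \geq N_4(\om)$ bounds the contribution by $2 N_4(\om) + c\, \al \ln n$. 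On $[N n^2, \infty)$, rescaling $T = n^2 t$ and applying (c) gives decay of order $t^{-(1+\vr/2)}$, so this piece is $O(1)$, as in \eqref{eq:appl_hoelderreg}. The intermediate integral carries the main term: the diagonal piece $\int_{n^{\al}}^{N n^2} p^{\om}_{0,t}(0,0)\, \md t$, after the substitution $t = n^2 s$ and (b), equals $\tfrac{1}{2\pi\sqrt{\det\Si^2}}(2 - \al)\ln n + O(1)$, while the off-diagonal piece $\int_{n^{\al}}^{N n^2} p^{\om}_{0,t}(0, \lfloor nx \rfloor)\, \md t$ is shown to be negligible relative to $\ln n$ by splitting at $n^{\be}$, using the Gaussian tail of (a) below $n^{\be}$ and the on-diagonal bound above $n^{\be}$, and then letting $\be \uparrow 2$, precisely as in \eqref{eq:int_middle}--\eqref{eq:off_decay}. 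Letting $\al \downarrow 0$ and using $2 \cdot \tfrac{1}{2\pi\sqrt{\det\Si^2}} = \tfrac{1}{\pi\sqrt{\det\Si^2}}$ yields the claim; uniformity over $x \in K$ follows from $c\, n \leq |\lfloor nx \rfloor| \leq c'\, n$ on $K$.

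The main obstacle is securing the inputs (b) and (c) in the time-inhomogeneous setting, where $p^{\om}_{s,t}$ genuinely depends on the pair $(s,t)$ rather than on the elapsed time $t - s$ alone. Concretely, the parabolic Harnack inequality must hold uniformly in the initial time, and one must verify that the rescaled kernels $n^2 p^{\om}_{0, n^2 \cdot}(0, \cdot)$ are tight and equicontinuous so that the quenched functional CLT of \cite{ACDS18} can be promoted to the pointwise on-diagonal convergence with the correct constant. Once these inputs are in place, the remaining steps are a direct transcription of the static computation.
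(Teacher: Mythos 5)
Your proposal is correct and follows essentially the same route as the paper: the paper's proof is a one-paragraph reduction to the argument for Theorem~\ref{thm:green1}, taking the Gaussian upper bound from Assumption~\ref{ass:environment_dyn}(ii) and citing \cite{CS19} for the dynamic quenched local limit theorem and H\"older regularity, which are exactly your inputs (b) and (c). The only difference is that you sketch how to establish those two inputs (via the invariance principle of \cite{ACDS18} and a parabolic Harnack inequality) rather than citing them as known, and your identification of the limiting constant $\tfrac{1}{\pi\sqrt{\det\Si^2}}$ and the simplifications from $\cC_\infty=\bbZ^2$ and $\th^\om\equiv 1$ are all consistent with the paper.
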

\begin{proof}
  This follows by similar arguments as in the proof of Theorem~\ref{thm:green1} above.  A local limit theorem and H\"older-regularity have been established in \cite{ACS20}.  The required heat kernel decay is stated in Assumption~\ref{ass:environment_dyn}-(iv).
\end{proof}
We shall also state a corresponding annealed result.  For abbreviation we write $\bar{p}_t(x,y) \ldef \mean\big[p_{0,t}^{\om}(x,y)\big]$ for the averaged transition density. 
\begin{theorem}\label{thm:convergence}
  Suppose that  $\prob\!\big[ c^{-1} < \om_t(e) < c\big] = 1$ for some $c \in[1, \infty)$.  Then, the annealed potential kernel
  \begin{align*}
    \bar{a}(x,y)
    \;\ldef\;
    \int_0^{\infty}\! \big(  \bar{p}_t(0, 0) - \bar{p}_t(x, y) \big)\, \md t,
    \qquad x, y \in \bbZ^2,
  \end{align*}
  is well-defined, and for any  $K = \{x \in \bbR^2 : |x| \in [k_1, k_2]\}$ with $0 < k_1 < k_2 < \infty$,
  \begin{align}
    \lim_{n \to \infty}\; \sup_{x\in K} \bigg|
    \frac{1}{\ln n}\,
    \bar{a}\big(0, \la_n(x) \big)
    \,-\, 
    \frac{1}{\pi \sqrt{\det \Si^2}} \bigg| \;=\; 0.
  \end{align}
\end{theorem}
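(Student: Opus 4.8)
The plan is to mirror the proof of Theorem~\ref{thm:green1}, but at the \emph{annealed} level, using the uniform ellipticity assumption $c^{-1} < \om_t(e) < c$ to supply all the analytic inputs directly rather than importing them from the moment-condition literature. First I would record the annealed analogues of the three ingredients used in the static proof. The near-diagonal and off-diagonal Gaussian upper bounds for $p_{0,t}^{\om}(0,y)$ hold $\prob$-a.s.\ in the uniformly elliptic case (this is classical, e.g.\ via Nash--Aronson or Delmotte-type estimates, and is in any case stronger than Assumption~\ref{ass:environment_dyn}~(ii)); averaging over $\om$ preserves them, so $\bar p_t(0,y)$ satisfies the same two-regime bound. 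H\"older regularity in space and the quenched local limit theorem hold by \cite{CS19}; taking expectations in the local limit theorem gives the \emph{annealed} local limit theorem $\lim_{n\to\infty} n^2\, \bar p_{n^2}(0,0) = \bar g/2$ (the limit is deterministic, so dominated convergence transfers the quenched statement to the annealed one, the domination coming from the near-diagonal bound). The annealed H\"older estimate likewise follows by averaging the quenched bound, since the constants there are deterministic.

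With these annealed inputs in hand, the decomposition is verbatim that of Theorem~\ref{thm:green1}. I would split
\begin{align*}
  \bar a\big(0,\lfloor nx\rfloor\big)
  \;=\;
  \int_0^{n^\al}\! + \int_{n^\al}^{N n^2}\! + \int_{N n^2}^{\infty}\!
    \big( \bar p_t(0,0) - \bar p_t(0,\lfloor nx\rfloor) \big)\, \md t.
\end{align*}
The short-time piece $\int_0^{n^\al}$ is controlled by the trivial bound $\bar p_t(0,0)+\bar p_t(0,\lfloor nx\rfloor)\le 2c^{-1}$ (uniform ellipticity bounds the speed measure, here the counting measure is $1$, so in fact by a constant times $\mean[\mu^\om(0)^{-1}]$) together with the annealed H\"older regularity, yielding a bound of order $\al\ln n$ plus a constant, exactly as in \eqref{term1}. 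The tail piece $\int_{Nn^2}^\infty$ is finite by the rescaled H\"older estimate \eqref{eq:appl_hoelderreg}--\eqref{term2}, giving decay $t^{-1-\vr/2}$. For the middle piece I would again write it as
\begin{align*}
  \int_{n^{\al-2}}^{N}\! t^{-1}\Big( n^2 t\, \bar p_{n^2 t}(0,0) - \tfrac{\bar g}{2}\Big)\md t
  + \tfrac{\bar g}{2}\int_{n^{\al-2}}^N t^{-1}\md t
  - \int_{n^\al}^{Nn^2} \bar p_t(0,\lfloor nx\rfloor)\, \md t,
\end{align*}
where the first term is $o(\ln n)$ by the annealed local limit theorem, the second produces the leading $\bar g\,\ln n$ up to the $\al$-error, and the third vanishes after division by $\ln n$ by the off-diagonal Gaussian decay, exactly as in \eqref{eq:int_middle}--\eqref{eq:off_decay}. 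Letting $\al\downarrow 0$ and $\be\uparrow 2$ closes the argument.

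The only genuine simplification relative to the static case is that $\lfloor nx\rfloor$ replaces $\la_n(x)$: since the walk now lives on all of $\bbZ^2$ (no percolation cluster), one has $|\lfloor nx\rfloor|\asymp n$ uniformly for $x\in K$ directly, so the cluster-geometry detour via \cite{BH09} is unnecessary and the supremum over $x\in K$ passes through each estimate without extra work. The main obstacle I anticipate is not any single step but the \emph{uniformity over $x\in K$ and the interchange of expectation with the time-integral}: one must check that the annealed local limit convergence and the annealed H\"older bound hold with constants independent of the endpoint $\lfloor nx\rfloor$ (immediate from translation invariance of $\prob$) and that Tonelli/dominated convergence legitimately moves $\mean$ inside $\int_0^\infty\md t$ defining $\bar a$. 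The latter is where uniform ellipticity is essential: it guarantees the quenched integrand is dominated by an $\om$-independent integrable function (via the deterministic Gaussian bound), so $\mean$ and $\int$ commute and the annealed potential kernel is well-defined with the same $\bar g$ as in the quenched statement.
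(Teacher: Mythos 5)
Your proposal is correct and follows the same overall scheme as the paper: the proof of Theorem~\ref{thm:convergence} is, as in your write-up, a rerun of the three-way time decomposition from Theorem~\ref{thm:green1} with annealed inputs, and the short-time and middle pieces are handled exactly as you describe. The two places where you diverge from the paper are in how the inputs are sourced. For the local limit theorem, the paper cites an annealed LLT directly (\cite[Theorem~1.6]{An14}, extended in \cite[Theorem~1.11]{AT19}), whereas you derive it by averaging the quenched LLT of \cite{CS19} under the deterministic on-diagonal domination $n^2 p^{\om}_{0,n^2}(0,0)\le c$; both are legitimate, and your dominated-convergence argument is sound in the uniformly elliptic case. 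For the tail integral $\int_{Nn^2}^\infty$, the paper does \emph{not} use H\"older regularity at all: it invokes the annealed nearest-neighbour gradient estimate $\big|\bar p_t(0,x)-\bar p_t(0,y)\big|\le c\,t^{-3/2}$ for $\{x,y\}\in E_2$ (\cite[Theorem~1.6]{CN00}, \cite[Theorem~1.1]{DD05}), telescopes over a path of length $\asymp n$ to get $\big|\bar p_t(0,0)-\bar p_t(0,\lfloor nx\rfloor)\big|\le c\,n\,t^{-3/2}$, and integrates to obtain the explicit bound $cN^{-1/2}$ as in \eqref{eq:term2b}. Your alternative --- averaging the quenched H\"older estimate, using that under uniform ellipticity the parabolic Harnack constants and the threshold $N_2$ are deterministic --- also works and keeps the argument closer to the static proof, but it leans on a deterministic-constant version of the regularity statement that you should state explicitly rather than attribute to \cite{CS19} (whose constants are random under the moment condition); the paper's gradient-estimate route avoids this issue entirely and is quantitatively sharper. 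One small correction: in the short-time piece no H\"older regularity is needed, only the on-diagonal bound $\bar p_t(0,0)\le c\,t^{-1}$, and since the dynamic walk uses the counting measure the trivial bound is simply $\bar p_t\le 1$; also note that the interchange of $\mean$ with $\int_0^\infty\md t$ is not required for the well-definedness of $\bar a$ as stated, since $\bar a$ is defined directly in terms of $\bar p_t$.
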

\begin{proof}
  Again this follows along the lines of the proof of Theorem~\ref{thm:green1}.  As mentioned above, Gaussian bounds have been shown in \cite{DD05} and an annealed local limit theorem has been stated in \cite[Theorem~1.6]{An14}, which has been extended to degenerate conductances in \cite[Theorem~1.11]{AT19}.  Further, an annealed gradient estimate on the heat kernel of the form 
  \begin{align*}
    \big| \bar{p}_{t}(0,x) - \bar{p}_t(0, y) \big| 
    \;\leq\; 
    c\, t^{-3/2},
    \qquad \forall t>0, \, \{x, y\} \in E_2,
  \end{align*}
  has been established in \cite[Theorem~1.6]{CN00} or \cite[Theorem~1.1]{DD05}.  Hence, for $x \in K$ we have by the triangle inequality 
  \begin{align*}
    \big| \bar p_{t}(0, 0) - \bar p_{t}(0, \la_n(x)) \big| 
    \;\leq\; 
    c\, n\, t^{-3/2},
  \end{align*}
  so that for any $N$,
  \begin{align} \label{eq:term2b}
    \int_{N n^2}^{\infty}\!%
      \big| \bar p_t(0, 0) - \bar p_t(0, \la_n(x)) \big|\,
    \md t
    &\;=\; 
    c\, n \int_{N n^2}^{\infty} t^{-3/2}\, \md t 
    \;\leq\; 
    c\, N^{-1/2} 
    \;<\; 
    \infty,
  \end{align}
  which may serve as a replacement for \eqref{term2} and the H\"older regularity estimate. 
\end{proof}

\subsection{Application to stochastic interface models} \label{subsec:HS}  
We briefly outline an application of Theorem~\ref{thm:convergence} in the context of the Ginzburg-Landau $\nabla \phi$ interface model, see \cite{Fu05}.  The interface is described by a field of height variables $\{\phi_t(x) : x \in \bbZ^d, t \geq 0\}$, whose stochastic dynamics are governed by the following infinite system of stochastic differential equations involving nearest neighbour interaction:
\begin{align*} 
  \phi_t(x)
  \;=\;
  \phi(x) \,-\, \int_0^t \sum_{y:|x-y|=1} V'(\phi_s(x)-\phi_s(y)) \, \md s
  \,+\, \sqrt{2}\, w_t(x),
  \qquad x \in \bbZ^d.
\end{align*}
Here $\phi$ is the height of the interface at time $t = 0$, $\{w(x) : x \in \bbZ^d\}$ is a collection of independent Brownian motions and the potential $V \in C^2(\bbR, \bbR_+)$ is even and strictly convex, i.e.\ $c_- \leq V'' \leq c_+$ for some $0 < c_ - < c_ + < \infty$.  Then the formal equilibrium measure for the dynamic is given by the Gibbs measure $Z^{-1} \exp(-H(\phi)) \prod_x d\phi(x)$ on $\bbR^{\bbZ^d}$ with  formal Hamiltonian given by $H(\phi) = \frac{1}{2} \sum_{x\sim y} V(\phi(x)-\phi(y))$.  In dimension $d \geq 3$ this can be made rigorous by taking the thermodynamical limit. In any lattice dimension $d\geq 1$ one considers the gradient process $(\nabla_e \phi_t, : e \in E_d, t\geq 0 )$ instead. Then,  for every $u\in \bbR^d$ describing the tilt of the interface, the gradient  process admits a unique shift invariant ergodic $\nabla \phi$-Gibbs measure $m_u$, see \cite{FS97}. 

By the so-called \emph{Helffer-Sj\"{o}strand representation}  (cf.\ \cite{DD05, GOS01, Fu05}) the variances in the $\nabla \phi$ model can be written in terms of the \emph{annealed} potential kernel of a random walk among dynamic random conductances.  More precisely, for any $x \in \bbZ^d$,
\begin{align}\label{eq:HS}
  \mathrm{var}_{m_u}\big[\phi_0(x) - \phi_0(0)\big]
  \;=\;
  2\, \bar{a}_u(0,x),
\end{align}
where $\bar{a}_u$ denotes the annealed potential kernel (with expectations taken w.r.t.\ $m_u$) associated with  the dynamic RCM with conductances given by
\begin{align} \label{eq:HS_conduct}
  \om_t(x,y)
  \;\ldef\;
  V''\big(\phi_t(y)-\phi_t(x)\big),
  \qquad \{x,y\} \in E_d,\quad t \geq 0.
\end{align}
As an immediate consequence from Theorem~\ref{thm:convergence} we get the following scaling limit.
\begin{theorem} \label{thm:HS}
  Let $d = 2$.  Then, for any ergodic Gibbs measure $m_u$ and any annulus $K = \{x \in \bbR^2 : |x| \in [k_1, k_2]\}$ with $0 < k_1 < k_2 < \infty$,
  \begin{align*}
    \lim_{n \to \infty} \sup_{x \in K} \frac{1}{\ln n}
    \mathrm{var}_{m_u} \big[\phi_0\big(\la_n(x)) - \phi_0(0)\big]
    \;=\;
    \frac{2}{\pi \sqrt{\det \Si_u^2}}.
  \end{align*}
  Here $\Si_u^2$ denotes the covariance matrix in the invariance principle for the random walk under the dynamic random conductances defined in \eqref{eq:HS_conduct}.
\end{theorem}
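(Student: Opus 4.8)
The plan is to obtain Theorem~\ref{thm:HS} as a direct consequence of the Helffer--Sj\"ostrand representation \eqref{eq:HS} combined with the annealed scaling limit in Theorem~\ref{thm:convergence}. First I would use \eqref{eq:HS} to rewrite, for every $n$ and every $x \in K$,
\begin{align*}
  \mathrm{var}_{m_u}\big[\phi_0(\lfloor n x\rfloor) - \phi_0(0)\big]
  \;=\;
  2\,\bar a_u\big(0,\lfloor n x\rfloor\big),
\end{align*}
so that the claimed limit is equivalent to the statement that $\tfrac{1}{\ln n}\,\bar a_u(0,\lfloor n x\rfloor)$ converges to $(\pi\sqrt{\det\Si_u^2})^{-1}$, uniformly in $x \in K$. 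Thus the factor $2$ in the conclusion is exactly the factor $2$ appearing in \eqref{eq:HS}, and the entire content of the theorem reduces to checking that the dynamic RCM with conductances \eqref{eq:HS_conduct} falls within the scope of Theorem~\ref{thm:convergence}.

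To apply Theorem~\ref{thm:convergence} I would verify its hypotheses for the environment defined by $\om_t(\{x,y\}) = V''(\phi_t(y)-\phi_t(x))$. The uniform ellipticity requirement $\prob[c^{-1} < \om_t(e) < c] = 1$ is immediate from the strict convexity assumption $0 < c_- \le V'' \le c_+ < \infty$: since $V''$ takes values in $[c_-,c_+]$, every conductance lies in $[c_-,c_+]$, and one may take $c = 1 + (c_+ \vee c_-^{-1})$. It then remains to confirm that, under the equilibrium measure $m_u$, the conductance field satisfies the standing space-time ergodicity and joint measurability assumptions of Section~\ref{sec:dyn}. Here I would invoke the fact that the stationary gradient dynamics started from the shift-invariant ergodic $\nabla\phi$-Gibbs measure $m_u$ of \cite{FS97} yields a gradient process $(\nabla_e\phi_t)$ that is stationary and ergodic with respect to the group of space-time shifts $\tau_{t,x}$; pushing this forward through the translation-covariant continuous map $r \mapsto V''(r)$ produces a space-time stationary, ergodic law $\prob$ on $\Om$ for which $(\om,t,x)\mapsto\indicator_A(\tau_{t,x}\om)$ is jointly measurable. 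This is precisely the setting in which $\bar a_u$ and the covariance matrix $\Si_u^2$ of the quenched invariance principle for $X$ (with expectations taken under $m_u$) are defined.

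With these two points in place, Theorem~\ref{thm:convergence} applies verbatim and gives
\begin{align*}
  \lim_{n \to \infty}\,\sup_{x \in K}\,
  \Big| \tfrac{1}{\ln n}\,\bar a_u\big(0,\lfloor n x\rfloor\big) - \tfrac{1}{\pi\sqrt{\det\Si_u^2}}\Big|
  \;=\; 0,
\end{align*}
and multiplying by $2$ yields the assertion. I expect the only genuine obstacle to be the second verification above, namely establishing the space-time ergodicity and joint measurability of the conductance field induced by the equilibrium interface dynamics; the uniform ellipticity, the identification of the prefactor, and the passage through \eqref{eq:HS} are all routine once Theorem~\ref{thm:convergence} is available. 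A secondary point worth stating carefully is that the covariance matrix $\Si_u^2$ in the limit is the one associated with the specific dynamic environment \eqref{eq:HS_conduct} under $m_u$, so that its dependence on the tilt $u$ is inherited entirely from $m_u$.
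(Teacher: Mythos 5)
Your proposal is correct and follows essentially the same route as the paper: reduce the variance to the annealed potential kernel via the Helffer--Sj\"ostrand identity \eqref{eq:HS}, check that the conductances \eqref{eq:HS_conduct} are uniformly elliptic (from $c_- \leq V'' \leq c_+$) and space-time stationary ergodic under $m_u$, and then apply Theorem~\ref{thm:convergence}. Your write-up merely spells out the ergodicity and measurability verification in more detail than the paper does.
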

\begin{proof}
  The conductances in \eqref{eq:HS_conduct} are stationary ergodic under any Gibbs measure $\mu$, and they are uniformly elliptic since the potential function $V$ is assumed to be strictly convex. Hence, Theorem~\ref{thm:convergence} applies and implies the result by \eqref{eq:HS}.
\end{proof}
\begin{remark}
  In $d \geq 3$ the results in \cite{ACDS18, ACS20} can be used to show a scaling limit for the space-time covariances for a class of potentials where the uniform upper bound on the conductances in \eqref{eq:HS_conduct} is replaced by a moment condition (see \cite{AT19}). However, relaxing the lower bound is more challenging as it is required in the existence proof of the Gibbs measure which is based on a Brascamp-Lieb inequality.
\end{remark}

\subsubsection*{Acknowledgment}
We thank the anonymous referees for the careful reading and a number of very constructive suggestions to improve an earlier version of the paper. 

\bibliographystyle{abbrv}
\bibliography{literature}

\end{document}